\theoremstyle{definition}
\newtheorem{definition}{Definition}[section]
\newtheorem{example}{Example}[section]
\newtheorem{theorem}{Theorem}[section]
\newtheorem{corollary}{Corollary}[section]
\title{On correct structures and similarity measures of soft sets along with historic comments of Prof. D. A. Molodtsov}
\date{}
\begin{document}
\author{Santanu Acharjee$^1$ and Amlanjyoti Oza$^2$\\
$^{1,2}$Department of Mathematics\\
Gauhati University\\
Guwahati-781014, Assam, India.\\
e-mails: $^1$sacharjee326@gmail.com, $^2$amlanjyotioza21@gmail.com\\
}

\maketitle
\begin{center}
    \textbf{Abstract} \\
\end{center}
After the paper of Molodtsov [ D. Molodtsov,  Soft set theory—first results. Computers and Mathematics with Applications, 37(4-5) (1999), 19-31], soft set theory grew at a breakneck pace. Several authors have introduced various operations, relations, results, as well as other aspects in soft set theory and hybrid structures incorrectly, despite their widespread use in mathematics and allied areas. In his paper [D. A. Molodtsov,  Equivalence and correct operations for soft sets. International Robotics and Automation Journal, 4(1) (2018), 18-21], Molodtsov, the father of soft set theory, pointed out the wrong results and notions. Molodtsov [D. A. Molodtsov,  Structure of soft sets. Nechetkie Sistemy i Myagkie, 12(1) (2017), 5-18] also stated that the concept of soft set had not been fully understood and used everywhere. As a result, it seemed reasonable to revisit the quirks of those conceptions and provide a formal account of the notion of soft set equivalency. Molodtsov already explored the correct operations on soft sets. We use some notions and results of Molodtsov [Molodtsov D.A. (2017) Structure of soft sets. Nechetkie Sistemy i Myagkie, 12(1) (2017),  5-18] to  create  matrix representations as well as certain associated operations of soft sets, and to quantify the similarity between two soft sets.\\ \\
\noindent
{\bf 2020 AMS Classifications:} 03C99, 03B52,03B70, 68T27, 68T09. \\
\noindent
{\bf Keywords:} Soft set, operations of soft sets, matrix representation, similarity measure. 
\section{Introduction}
Molodtsov \cite{1} introduced  soft set theory in 1999.  It is important to note that soft set theory has contributed to create a new area of analysis named soft rational analysis \cite{16}. But, the idea \cite{4} of soft set theory was developed in  1980. Later, various authors e.g. \cite{5,6,7,8,9,10,11,12,13} changed several  outcomes that contradicted the accuracy of the ideas and operations established in  \cite{1,2,3, 4} and others. In \cite{2}, Molodtsov mentioned, \textit{``... many authors have introduced new operations and relations for soft sets and used these structures in various areas of mathematics and in applied science. Unfortunately in some works the introduction of operations and relationships for soft sets were carried out without due regard to the specific of soft set definition"}. In section 2 of \cite{3}, the notion  of correctness of soft operations is explicitly stated. Almost all the authors of the previously stated  papers have used the incorrect notion of soft subset of a soft set \cite{2}. Molodtsov also stated in \cite{2} that the authors of \cite{6} established definitions of  complement, union, and intersection of soft sets incorrectly. Moreover,  many researchers  have utilised these incorrect notions in their works. Çağman and Enginoğlu  \cite{9} also defined a new notion of soft set and presented the matrix form of it, which takes into account a subset of the collection of parameters. This representation is erroneous because even if we are given the soft set's matrix, we cannot determine the soft set along with original set of attributes. Moreover, this notion has difference with the original notion of soft sets defined by Molodtsov \cite{1}. In this regard, we want to attract the attention of the community of soft set researchers to a comment on ResearchGate \cite{21}. On 15/11/2018, Molodtsov commented on a paper of  Al-Qudah and Hassan \cite{20}, which is available in ResearchGate till writing of this paper. He commented as follows \cite{21}:\\\\
\noindent
``{\it Dear Colleagues, I did not write such a definition of a soft set. You have added one extra condition that the set of parameters is a subset of a fixed set."}\\\\
\noindent
We provide screenshot ( figure 1) of this very important comment in the history of soft set theory for our readers. It was taken by the first author of this paper through his ResearchGate account. 
\begin{figure}[h]
    \centering
    \includegraphics[width=15cm, height=14cm]{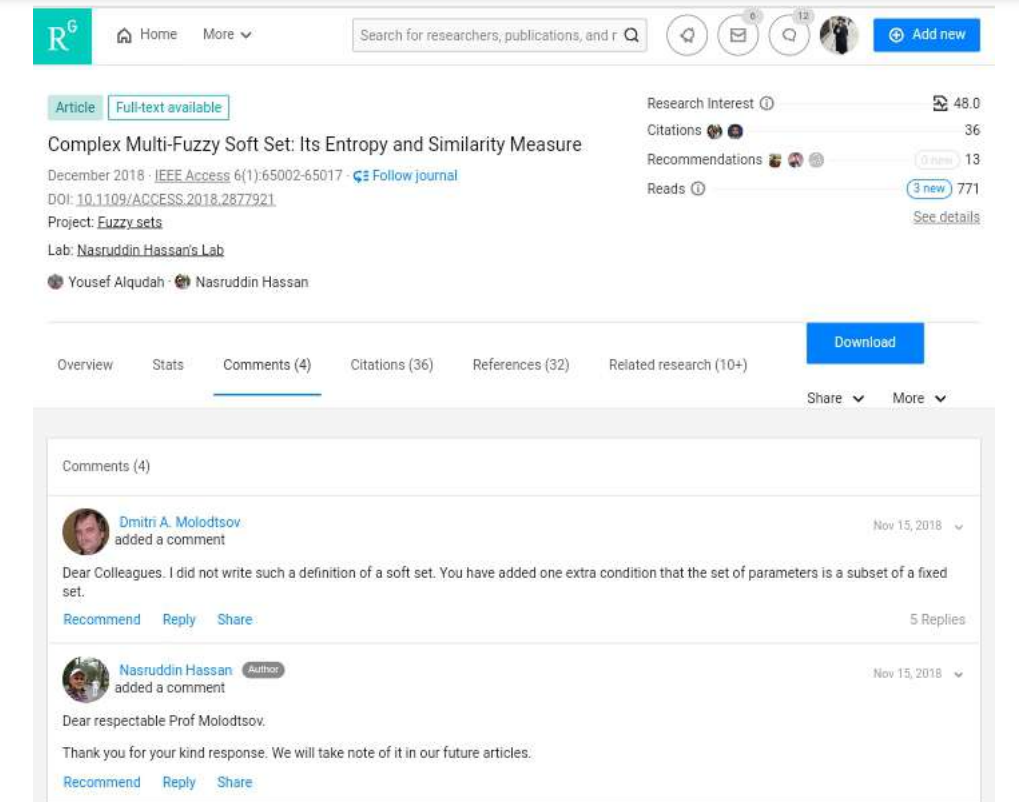}
    \caption{Comment of Prof. D. Molodtsov on ResearchGate}
    \label{fig:my_label}
\end{figure}\\ 
\noindent
In \cite{8}, the authors considered the sets of
attributes of the two soft sets for measuring similarity between them. But, it is not necessary that the sets of attributes should always contain some common
elements. In \cite{12}, Kamacı defined a formula for measuring similarity, which results, if there is no any common attribute between the two soft sets then the similarity between them is always 0. During  early research career in soft set theory and hybrid structures \cite{14, 15},  the first author of this paper also followed wrong notions of soft set and associated structures, which were developed by others other than Molodtsov. But, it is not a matter of shame for the first author as he learnt correct concepts of soft set theory and related structures while working with Molodtsov in \cite{16} and in other two papers. Since then, he has been trying to follow the correct paths in soft set theory as set by Molodtsov. \\ 

\noindent
In \cite{3}, Molodtsov mentioned, ``\textit{The principle of constructing correct operations for soft sets is very simple. Operations should be defined through families of sets $\tau(S,A)$}". Hence, the results of soft sets must be  dependent on the sets $\tau(F,A)$ and $\tau(G,B)$ of two soft sets $(F,A)$ and $(G,B)$ respectively, but not on the sets of attributes. In this paper, we try to propose various correct operations of soft sets accurately in matrix forms, as well as we find a similarity measure between two soft sets that is independent of the set of parameters. We also consider the condition where the set of attributes can have varying cardinality and no shared parameters, which follows philosophy of Molodtsov behind introducing soft set theory \cite{1,2,3,4}.
\section{Preliminaries} 
In this section, we discuss some preliminary notions of soft set theory and related results. Let $A$ be a set of parameters and $X$ be a universal set, over which a soft set is defined. The formal definition of soft set is as follows: \\
\begin{definition} \cite{1}
A pair $(S,A)$ will be called a soft set over $X$, if $S$ is a mapping from the set $A$ to the set of subsets of the set $X$, i.e., $S:A\rightarrow 2^X$. In fact, a soft set is a parametrized family of subsets.\\ \\
\noindent
If the soft set $(S,A)$ is given, then the family $\tau(S,A)$ can be defined as  $\tau(S,A)=\{S(a),a\in A\}$ \cite{3}.\\
\begin{definition}
\cite{3} Two soft sets $(S,A)$ and $(S',A')$ given over the universal set $X$ will be called equal and we write $(S,A)=(S',A')$ if and only if $S=S'$ and $A=A'$.
\end{definition}

\begin{definition}\cite{3}
Two soft set $(S,A)$ and $(S',A')$ given over the universal set $X$ will be called equivalent and written as $(S,A)\cong (S',A')$ if and only if $\tau(S,A)=\tau(S',A')$.
\end{definition}
\end{definition}
\noindent It is easy to note that equivalence of soft set is an equivalence relation.
\begin{definition}\cite{3}
A soft set $(S,A)$ internally approximates a soft set $(F,D)$ denoted by $(S,A)\subseteq (F,D)$ if for any $d\in D$ such that $F(d)\neq \phi$, there exists $a\in A$ for which $\phi \neq S(a)\subseteq F(D)$.
\end{definition}
\begin{definition}\cite{3}
A soft set $(S,A)$ externally approximates a soft set $(F,D)$ denoted by $(S,A)\supseteq (F,D)$ if for any $d\in D$ such that $F(d)\neq X$, there exists $a\in A$ for which $X \neq S(a)\supseteq F(D)$.
\end{definition}
\begin{definition}\cite{3}
If $(S,A)\subseteq (F,D)$ but the relation $(F,D)\subseteq (S,A)$ has no place, then we say that the soft set $(S,A)$ internally strictly approximate the soft set $(F,D)$, denoted by $(S,A)\subset (F,D)$.
\end{definition}
\begin{definition}\cite{3}
If $(S,A)\supseteq (F,D)$ but the relation $(F,D)\supseteq (S,A)$ has no place, then we say that the soft set $(S,A)$ outwardly strictly approximate the soft set $(F,D)$, denoted by $(S,A)\supset (F,D)$.
\end{definition}
\begin{definition}\cite{3}
The soft set $(S,A)$ is internally equivalent to the soft set $(F,D)$, denoted by $(S,A) \begin{matrix}
\subset \\
\approx
\end{matrix} (F,D)$ if $(S,A)\subseteq (F,D)$ and $(F,D)\subseteq (S,A)$.
\end{definition}

\begin{definition}\cite{3}
The soft set $(S,A)$ is externally equivalent to the soft set $(F,D)$, denoted by $(S,A) \begin{matrix}
\supset \\
\approx
\end{matrix} (F,D)$ if $(S,A)\supseteq (F,D)$ and $(F,D)\supseteq (S,A)$.
\end{definition}

\begin{definition}\cite{3}
The soft set $(S,A)$ is weakly equivalent to the soft set $(F,D)$, denoted by $(S,A)\approx (F,D)$ if $(S,A) \begin{matrix}
\subset \\
\approx
\end{matrix} (F,D)$ and $(S,A) \begin{matrix}
\supset \\
\approx
\end{matrix} (F,D)$.
\end{definition}

\begin{definition}\cite{3}
The minimal and maximal on inclusion of sets of the family $\tau(S,A)$ is defined as follows.
\begin{center}

$MIN(\tau(S,A))=\{B\in \tau(S,A)| B\neq \phi, \nexists B'\in \tau(S,A):B'\subset B\neq B'\neq \phi \}$\\
$MAX(\tau(S,A))=\{B\in \tau(S,A)| B\neq X, \nexists B'\in \tau(S,A):B'\supset B\neq B'\neq X \}$

\end{center}

\end{definition}

\begin{definition}\cite{3}
A relation $\Omega$ is called correct if for any quadruple of pairwise equivalent soft sets $(S,A)\cong (S',A')$, $(F,D)\cong (F',D')$, given over the universal set $X$, the following equality is satisfied.
\begin{center}
    $\Omega ((S,A),(F,D))=\Omega((S',A'),(F',D'))$
\end{center}
\end{definition}

\begin{definition}\cite{3}
The unary operation complement of $(S,A)$,  $C(S,A)=(W,A)$ is defined as follows. The set of parameters remains the same and the mapping is given by $W(a)=X\setminus S(a)$ for any $a\in A$. 
\end{definition}
\begin{definition}\cite{3}
The binary operation union $(S,A)\cup (F,D)=(H,A\times D)$ for a pair of soft sets $(S,A)$ and $(F,D)$ given over a universal set $X$ is defined as follows: The parameter set is chosen equal to the direct product of the  parameter sets, i.e., equal to $A\times D$, and the corresponding mappings are given by the formula $H(a,d)=S(a)\cup F(d)$, $(a,d)\in A\times D$.
\end{definition}
\begin{definition}\cite{3} 
The binary operation intersection $(S,A)\cap (F,D)= (W,A\times D)$  for a pair of soft sets $(S,A)$ and $(F,D)$ given over a universal set $X$ is defined as follows: The parameter set is chosen equal to the direct product of the parameter sets, i.e., equal to $A\times D$, and the corresponding mappings are given by the formula $W(a,d)=S(a)\cap F(d)$, $(a,d)\in A\times D$.
\end{definition}
\begin{definition}
The binary operation product $(S,A)\times (F,D)=(X, A\times D)$ for a pair of soft sets $(S,A)$ and $(F,D)$ given over a universal set $X$ is defined as follows. The parameter set is chosen equal to the direct product of the parameter sets, i.e., equal to $A\times D$ and the corresponding mappings are given by the formula $X(a,d)=S(a) \times F(d), (a,d)\in A\times D$.
\end{definition}
\noindent
Now, we  consider an illustrative example to discuss the above operations.
\begin{example}
Let us consider two soft sets $(F, A)$ and $(G, B)$ over a universal set $X$, where $A$ and $B$ be two sets of attributes. We consider $X=\{a, b, c\}$,
$A$=$\{x, y, z\}$, and
$B$=$\{m,n,o\}$. We  define the soft set $(F,A)$ as follows:\\ \begin{center}
    $F(x)=\{b,c\}$, $F(y)=\{c\}$, $F(z)=\{a\}$.
\end{center}
Hence, $\tau(F,A)=\{\{b,c\},\{c\},\{a\}\}$.
Similarly, we define the soft set $(G,B)$ as follows:\\
\begin{center}
    $G(m)=\{a\}$, $G(n)=\{c\}$, $G(o)=\{c\}$.
\end{center}
\noindent
Hence, $\tau(G,B)=\{\{a\},\{c\}\}$.\\
\noindent
Now. we consider the unary operation complement `$C$' of the soft set $(F,A)$ which is given by
$C(F,A)=(CF,A)$, where $CF(a)=X\setminus S(a),\forall a \in A$. So, $CF(x)=\{a\}$, $CF(y)=\{a,b\}$, $CF(z)=\{b,c\}$. Therefore, $\tau(CF,A)=\{\{a\},\{a,b\},\{b,c\}\}$.\\
\noindent
Again, we consider the binary operation union `$\bigcup$' between the soft sets $(F,A)$ and $(G,B)$ denoted by $(F,A)\bigcup(G,B)=(H,A\times B)$, where $A\times B$ is the Cartesian product of $A$ and $B$, and the mapping is given by $H(a,b)=F(a)\cup G(a)$, where $(a,b)\in A\times B$. Thus, 
$A\times B=\{\{x,m\},\{x,n\},\{x,o\},\{y,m\},\{y,n\},\{y,o\},\{z,m\},\{z,n\},\{z,o\}\}$. So, we obtain
$H(x,m)=\{a,b,c\},\\ H(x,n)=\{b,c\},H(x,o)=\{b,c\}, H(y,m)=\{a,c\}, H(y,n)=\{c\}, H(y,o)=\{c\}, H(z,m)=\{a\}, H(z,n)=\{a,c\}$, and $H(z,o)=\{a,c\}$.
Hence, $\tau(H,A\times B)=\{\{a,b,c\},\{b,c\},\{a,c\},\{c\},\{a\}\}$.\\
\noindent
Now, we also consider the binary operation intersection `$\bigcap$' between $(F,A)$ and $(G,B)$ denoted by $ (F,A)\bigcap (G,B)= (W,A\times B)$,  and the mapping is given by
$W(a,b)=F(a)\cap G(b)$, where $(a,b)\in A\times B$. Thus, we get $W(x,m)=\phi,W(x,n)=\{c\}, W(x,o)=\{c\}, W(y,m)=\phi, W(y,n)=\{c\}, W(y,o)=\{c\}, W(z,m)=\{a\},$ $W(z,n)$ =$\phi$, and  $W(z,o)=\phi$.
Therefore, $\tau(W,A\times B)=\{\{a\},\{c\}\}$.\\
\noindent
Also, we consider the binary operation product `$\times$' between $(F,A)$ and $(G,B)$, denoted by $(F,A)\times (G,B)=(X,A\times B)$,  and the mapping is given by
$X(a,b)=F(a) \times G(b),$ where $(a,b)\in A\times B$.
$W(x,m)=\{b,c\}\times \{a\}=\{(b,a),(c,a)\}$, $W(x,n)=\{b,c\}\times \{c\}=\{(b,c),(c,c)\}$,
$W(x,o)=\{b,c\}\times \{c\}=\{(b,c),(c,c)\}$, $W(y,m)=\{c\}\times \{a\}=\{(c,a)\}$, $W(y,n)=\{c\}\times \{c\}=\{(c,c)\}$, $W(y,o)=\{c\}\times \{c\}=\{(c,c)\}$, $W(z,m)=\{a\}\times \{a\}=\{(a,a)\}$, $W(z,n)=\{a\}\times \{c\}=\{(a,c)\}$, and $W(z,o)=\{a\}\times \{c\}=\{(a,c)\}$.
Hence, $\tau(X,A\times B)=\{\{(b,a),(c,a)\},\{(b,c),(c,c)\},\{(c,a)\},\{(c,c)\},\{(a,a)\},\{(a,c)\}\}$.

\end{example}

\section{Matrix representation of soft sets}
In this section, we shall discuss the matrix representation of a soft set. A soft set $(F,A)$ defined over a universal set $X$ can be represented by a matrix $M$ such that the number of rows of $M$ is equal to the cardinality of the universal set $X$, and the number of columns of $M$ is equal to the cardinality of the set of attribute $A$. \\ \\
\noindent
We consider the cardinalities of $X$ and  $A$  to be finite for the practical feasibility of computation and other real life purposes. Let us consider $|X|= n$ and $|A|=m$, where $X=\{x_1,x_2,x_3,\dots,x_n\}$ and $A=\{a_1,a_2,a_3,\dots ,a_m\}$. Then, the binary representation table of $(F,A)$ is given below.\\

\begin{table}[h]
\begin{center}
\begin{tabular}{c|c c c c c}
& $F(a_1)$ & $F(a_2)$ & $F(a_3)$ & $\dots$ & $F(a_m)$\\
\hline
$x_1$ & $m_{11}$ & $m_{12}$ & $m_{13}$ & $\dots$ & $m_{1m}$\\
$x_2$ & $m_{21}$ & $m_{22}$ & $m_{23}$ & $\dots$ & $m_{2m}$\\
$x_3$ & $m_{31}$ & $m_{32}$ & $m_{33}$ & $\dots$ & $m_{3m}$\\
\vdots & \vdots & \vdots & \vdots & $\ddots$ & \vdots\\
$x_n$  & $m_{n1}$ & $m_{n2}$ &  $m_{n3}$ & \dots & $m_{nm}$

\end{tabular}
\end{center} 
\caption{\label{tab1}Binary representation table of soft set $(F,A)$}
\end{table}
\noindent
Here,  
\begin{center}
$m_{ij}=\begin{cases} 
1;\: if \: x_i \in F(a_j),\\
0;\: if \: x_i \notin F(a_j).
\end{cases}$
\end{center}
Moreover, $i=1,2,3, \dots , n$ and $j=1,2,3, \dots, m$. 
\noindent
Thus, we get the following matrix $M$  as a matrix representation of $(F,A)$ from table \ref{tab1}. 
\begin{center}
$M=
\begin{pmatrix}
m_{11} & m_{12} & m_{13} & \dots & m_{1m}\\
m_{21} & m_{22} & m_{23} & \dots& m_{2m}\\
m_{31} & m_{32} & m_{33} & \dots& m_{3m}\\
\vdots & \vdots & \vdots & \ddots & \vdots\\
m_{n1} & m_{n2} &  m_{n3} & ...& m_{nm}
\end{pmatrix}$

\end{center}
So, every soft set can be transformed into it's matrix form and if the matrix of a soft set is given, then, we can uniquely determine the soft set. Here, we should concern about the order of the elements of $X$ in case of matrix representation, i.e., two different orderings of the elements of the set $X$ may led to two different representations of the same soft set, which may cause difficulties to define various operations related to the matrix representation. Hence, the order of $x_i$'s must be mentioned in case of matrix representation, where $x_i \in X$. Let us consider example 2.1 to illustrate the matrix representation of soft set $(F,A)$.
\begin{example}
Given, $X=\{a, b, c\}$, 
$A=\{x, y, z\}$,
$F(x)=\{b,c\}$, $F(y)=\{c\}$, and $F(z)=\{a\}$. Then, the binary representation of $(F,A)$ is given below.\\

\begin{center}
    
$M=$
\begin{tabular}{c|c c c }

& $F(x)$ & $F(y)$ & $F(z)$ \\
\hline
$a$ & $0$ & $0$ & $1$\\
$b$ & $1$ & $0$ & $0$ \\
$c$ & $1$ & $1$ & $0$\\

\end{tabular} \\
\end{center}
Hence, we obtain the following matrix $M$ of the soft set $(F,A)$. 

\begin{center}
$M=
\begin{pmatrix}
0 & 0 & 1\\
1 & 0 & 0\\
1 & 1 & 0
\end{pmatrix}$
\end{center}

\end{example}
\section{Operations on soft sets in matrix form}
In this section, we  formulate four  operations viz. complement, union, intersection and product of soft sets in matrix form.
\subsection{Soft complement}
Let $M=(x_{ij})_{m\times n}$ be the matrix representation of the soft set $(F,A)$  defined over a universal set $X$ and order of $M$ is $m\times n$. Then, $M'=(x'_{ij})_{m\times n}$ is the matrix representation of $C(F,A)$,  the complement of the soft set $(F,A)$ and it is defined as below.\\
\begin{center}
    
$M' =
\left\{
	\begin{array}{ll}
		x'_{ij}=1  &,\mbox{if } x_{ij}=0,\\
		x'_{ij}=0 &,\mbox{if } x_{ij}=1.
	\end{array}
\right.$
\end{center}
Here, $i=1,2,\dots ,m$, and $j=1,2,\dots ,n$. 
It's easy to see that order of the matrix $M'$ is identical to that of order of   matrix $M$. 

\subsection{Soft union}
Let, $K=(x_{ij})_{m\times n}$ and $L=(y_{ij})_{m\times p}$ be  matrix representations of  soft sets $(F,A)$ and $(G,B)$ respectively, defined over a universal set $X$. Since both the soft 
sets are defined over $X$, hence the matrix $K$ and $L$ may have  orders $m\times n$ and $m\times p$ respectively, where $m$, $n$ and $p$ are positive integers and $|n-p|\ge 0$. Now, we  represent  union of the two soft sets $(F,A)$ and $(G,B)$ by matrix representation and we consider $M$ to be the matrix of $(F,A)\cup (G,B)=(H, A\times B)$. Then, the  matrix $M$ is of  order $m\times (n.p)$. Let us define the matrix $M=(m_{ij})_{m\times (n.p)}$ as follows.\\ \\

$M=\begin{cases} 
m_{ij}=max\{x_{i1},y_{ij}\}, where \: i=1,2,\dots , m\: and \:j=1,2,\dots ,p\\
m_{ij}=max\{x_{i2},y_{i(j-p)}\}, where \: i=1,2,\dots , m\: and \:j=p+1,p+2,\dots ,2p\\
m_{ij}=max\{x_{i3},y_{i(j-2p)}\}, where \: i=1,2,\dots , m \: and \: j=2p+1,2p+2,\dots ,3p\\
...................................................................................................................\\
m_{ij}=max\{x_{in},y_{i(j-(n-1)p)}\}, where \: i=1,2,\dots , m \: and \:j=(n-1)p+1,(n-1)p+2,\dots ,n.p

\end{cases}$

The matrix $M$  follows the definition of union of two soft sets $(F,A)$ and $(G,B)$ given in \cite{3}.
\subsection{Soft intersection}
Let, $K=(x_{ij})_{m\times n}$ and $L=(y_{ij})_{m\times p}$ be matrix representations of two soft sets $(F,A)$ and $(G,B)$ respectively, defined over a universal set $X$. Since both the soft sets are defined over $X$, hence the matrix $K$ and $L$ may have the orders $m\times n$ and $m\times p$, where $m$, $n$ and $p$ are positive integers and $|n-p|\ge 0$.
Now, we represent  intersection of the two soft sets $(F,A)$ and $(G,B)$ by matrix representation and we consider $N$ to be the matrix of $(F,A)\cap (G,B)=(H, A\times B)$. Then, the matrix $N$ is of order $m\times (n.p)$. Let us define the matrix $N=(n_{ij})_{m\times (n.p)}$ as follows:\\ \\

$N=\begin{cases} 
n_{ij}=min\{x_{i1},y_{ij}\}, where \: i=1,2,\dots , m\: and \:j=1,2,\dots ,p\\
n_{ij}=min\{x_{i2},y_{i(j-p)}\}, where \: i=1,2,\dots , m\: and \:j=p+1,p+2,\dots ,2p\\
n_{ij}=min\{x_{i3},y_{i(j-2p)}\}, where \: i=1,2,\dots , m \: and \: j=2p+1,2p+2,\dots ,3p\\
...................................................................................................................\\
n_{ij}=min\{x_{in},y_{i(j-(n-1)p)}\}, where \: i=1,2,\dots , m \: and \: j=(n-1)p+1,(n-1)p+2,\dots ,n.p

\end{cases}$
The matrix $N$  also follows the definition of intersection of two soft sets $(F,A)$ and $(G,B)$ given in \cite{3}.
\subsection{Soft product}
Let $(F,A)$ and $(G,B)$ be two soft sets defined over a universal set $X$, where $X=\{x_1, x_2, \dots x_m\}$, $A=\{e_1,e_2,\dots e_n\}$, and $B=\{g_1,g_2,\dots g_p\}$. Then, we define the mapping $prod: (X\times X)\times (A\times B)\rightarrow\{0,1\}$ such that,
\begin{center}
$prod\{(x_k,x_j),(e_i,g_j)\}=\begin{cases} 
1, \: if \: x_k\in F(e_i)\: and \: x_j\in G(g_j)\\
0, \: otherwise\\

\end{cases}$\\
\end{center}
Here, $k=1,2,\dots m$; $i=1,2,\dots n$, and $j=1,2,\dots p$
Then, the matrix representation of $(F,A)\times (G,B)$ is given by $P=(p_{ij})_{m^2\times (n.p)}$ such that,
\begin{center}

$P=\begin{cases} 
p_{kj}=1, \: if \: prod\{(x_k,x_j),(e_i,g_j)\} =1\\
p_{kj}=0, \: if \: prod\{(x_k,x_j),(e_i,g_j)\} =0\\

\end{cases}$\\

\end{center}
\noindent
Now, we consider some examples to illustrate the matrix representations with the operators complement, union, intersection and product of soft sets. 
\begin{example}
From example 3.1, the matrix $M'$ of $C(F,A)$ can be found below.
\begin{center}
$M'=
\begin{pmatrix}
1 & 1 & 0\\
0 & 1 & 1\\
0 & 0 & 1
\end{pmatrix}$
\end{center}

\end{example}
\begin{example}
Let us consider union of two soft sets $(F,A)$ and $(G,B)$ defined over a universal set $X$. Let $X=\{a, b, c\}$, $A=\{x, y, z\}$, and $B=\{m,n,o,p\}$.
We define the soft set $(F, A)$ as follows:
$F(x)=\{b,c\}$, $F(y)=\{c\}$, $F(z)=\{a\}$.
Similarly, we define the soft set $(G,B)$ as follows:
$G(m)=\{a\}$, $G(n)=\{c\}$, $G(o)=\{c\}$, $G(p)=\{a,c\}$.
Then, the set of  parameters for $(F,A)\bigcup (G,B)$ is $A\times B=\{\{x,m\},\{x,n\},\{x,o\},\{x,p\},\{y,m\},\{y,n\},\{y,o\},\{y,p\},\{z,m\},\{z,n\},\{z,o\}, \{z,p\}\}$. Moreover, we have the  matrices $M=(x_{ij})_{3\times 3}$ and $N=(y_{ij})_{3\times 4}$ for $(F,A)$ and $(G,B)$ respectively as shown below.
\begin{center}
$M=
\begin{pmatrix}
0 & 0 & 1\\
1 & 0 & 0\\
1 & 1 & 0
\end{pmatrix}$,
$N=
\begin{pmatrix}
1 & 0 & 0 & 1\\
0 & 0 & 0 & 0\\
0 & 1 & 1 & 1
\end{pmatrix}$
\end{center}
Now, the matrix representation of $(F,A)\bigcup (G,B)$ is obtained as a matrix $U=(m_{ij})_{3 \times 12}$, which is shown below. We can easily calculate $m_{ij}$ for $i=1,2,3$ and $j=1,2,3,4$. Thus, we obtain the matrix $U$ as shown below. \\
\begin{center}

$U=
\begin{pmatrix}
1 & 0 & 0 & 1 & 1 & 0 & 0 & 1 & 1 & 1 & 1 & 1\\
1 & 1 & 1 & 1 & 0 & 0 & 0 & 0 & 0 & 0 & 0 & 0\\
1 & 1 & 1 & 1 & 1 & 1 & 1 & 1 & 0 & 1 & 1 & 1

\end{pmatrix}$

\end{center}
The matrix $U$ is identical to the matrix  obtained by calculating the union of the soft sets $(F,A)$ and $(G,B)$ as in definition 1.5 and then constructing the matrix from the obtained soft set. \\
Similarly, the intersection between $(F,A)$ and $(G,B)$ can also be calculated using 3.3.
\end{example}

\begin{example}
Let $(F,A)$ and $(G,B)$ be two soft sets defined over a universal set $X$. Also, we consider $X=\{a,b,c\}$, $A=\{m,n\}$, and $B=\{x,y\}$. Let the binary representations of $(F,A)$ and $(G,B)$ be $M$ and $N$ respectively.
\begin{center}
   
$M=$
\begin{tabular}{c|c c c c c}

& $F(m)$ & $F(n)$ \\
\hline
$a$ & 1 & 0 \\
$b$ & 1 & 1 \\
$c$ & 0 & 0 \\

\end{tabular},
$N=$
\begin{tabular}{c|c c c c c}

& $G(x)$ & $G(y)$ \\
\hline
$a$ & 0 & 0 \\
$b$ & 1 & 0 \\
$c$ & 1 & 1 \\

\end{tabular}
\end{center}
Then, the binary representation of $(F,A)\times (G,B)=(H,A\times B)$ is given below.
\begin{center}
   
\begin{tabular}{c|c c c c c}

& $H(m,x)$ & $H(m,y)$ & $H(n,x)$ & $H(n,y)$ \\
\hline
$(a,a)$ & 0 & 0 & 0 & 0 \\
$(a,b)$ & 1 & 0 & 0 & 0 \\
$(a,c)$ & 1 & 1 & 0 & 0 \\
$(b,a)$ & 0 & 0 & 0 & 0 \\
$(b,b)$ & 1 & 0 & 1 & 0 \\
$(b,c)$ & 1 & 1 & 1 & 1 \\
$(c,a)$ & 0 & 0 & 0 & 0 \\
$(c,b)$ & 0 & 0 & 0 & 0 \\
$(c,c)$ & 0 & 0 & 0 & 0 \\

\end{tabular}

\end{center}
Hence, the matrix $P$ of $(F,A)\times (G,B)$ is given by
$P=
\begin{pmatrix}
0 & 0 & 0 & 0 \\
1 & 0 & 0 & 0 \\
1 & 1 & 0 & 0 \\
0 & 0 & 0 & 0 \\
1 & 0 & 1 & 0 \\
1 & 1 & 1 & 1 \\
0 & 0 & 0 & 0 \\
0 & 0 & 0 & 0 \\
0 & 0 & 0 & 0 
\end{pmatrix}$ .\\
\noindent

\end{example}
\noindent
 Now, we want to mention an important idea related to soft set theory. Molodtsov in \cite{2} wrote, \textit{``Of course, when you specify a soft set, you have some semantic interpretation of this soft set. However, the mathematical formalism of soft sets does not imply any semantic sense on family of subsets or on the parameters. The parameters serve only the purpose to indicate a specific subset \dots \dots \dots To determine topology we have to define only the family of vicinities of a point. No comparison of vicinities and no other properties of these subsets are needed. The situation is quite similar for soft sets, as the soft set is a family of vicinities of a point except that the initial point (as in topology) may not exist. Thus, the role of parameters in definition of soft sets is only auxiliary. Parameters are used only as names of subsets. Therefore, the introduction of the notion of equivalence of soft sets $(S,A)$ and $(S',A')$ should be based on equality of families of sets $\tau(S,A)$ and $\tau(S',A')$, but not on equality of point-to-set mappings $S$ and $S'$."} Similar assumption can also be found in \cite{4}. Hence, it may be noted that in case of union, intersection and product of soft sets $(F,A)$ and $(G,B)$, the set of parameters $A\times B$ and $B\times A$ are indistinguishable. Hence, we propose the following theorem.
\begin{theorem}
Let $(F,A)$, $(G,B)$ and $(H,C)$ be three soft sets defined over a universal set $X$, then the following properties hold.\\
(i) $C(C(F,A))=(F,A)$,\\
(ii) $(F,A)\cup (G,B)= (G,B)\cup (F,A)$,\\
(iii) $\{(F,A)\cup (G,B)\}\cup (H,C)= (F,A)\cup \{(G,B)\cup (H,C)\}$,\\
(iv) $(F,A)\cap (G,B)= (G,B)\cap (F,A)$,\\
(v) $\{(F,A)\cap (G,B)\}\cap (H,C)= (F,A)\cap \{(G,B)\cap (H,C)\}$.

\end{theorem}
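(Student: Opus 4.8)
The plan is to prove (i) as a genuine equality of soft sets (requiring both the same mapping and the same parameter set), while treating (ii)--(v) at the level of the families $\tau$, that is, as the equivalence $\cong$, which demands only $\tau(S,A)=\tau(S',A')$. The reason for this split is the main conceptual point, and the only genuine obstacle: union and intersection set the parameter set equal to the direct product, so the two sides of (ii)--(v) carry the literally distinct index sets $A\times B$ versus $B\times A$, and $(A\times B)\times C$ versus $A\times(B\times C)$ for the associativity laws. Strict equality of the point-to-set mappings therefore cannot hold, and one must make explicit that ``$=$'' in (ii)--(v) is read as $\cong$. Following the quoted principle of Molodtsov that parameters are only auxiliary names, what is to be verified is the coincidence of the families $\tau$, precisely the indistinguishability of $A\times B$ and $B\times A$ noted just before the theorem; once this reading is adopted, every remaining step reduces to elementary identities for ordinary subsets of $X$.

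For (i), I would compute directly. Writing $C(F,A)=(W,A)$ with $W(a)=X\setminus F(a)$, a second application gives $C(C(F,A))=C(W,A)=(W',A)$ with $W'(a)=X\setminus W(a)=X\setminus(X\setminus F(a))$. Since $F(a)\subseteq X$, the involution of set complement inside $X$ gives $W'(a)=F(a)$ for every $a\in A$; because the parameter set is unchanged, this is an honest equality $C(C(F,A))=(F,A)$.

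For the commutativity laws (ii) and (iv), the key step is the obvious bijection $\sigma\colon A\times B\to B\times A$, $\sigma(a,b)=(b,a)$. Writing $(F,A)\cup(G,B)=(H,A\times B)$ and $(G,B)\cup(F,A)=(H',B\times A)$, commutativity of set union gives $H(a,b)=F(a)\cup G(b)=G(b)\cup F(a)=H'(b,a)=H'(\sigma(a,b))$. Hence the two mappings have the same range, so $\tau\big((F,A)\cup(G,B)\big)=\tau\big((G,B)\cup(F,A)\big)$ and the two soft sets are equivalent. Part (iv) is identical after replacing $\cup$ by $\cap$.

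For the associativity laws (iii) and (v), I would unfold the definitions twice. The left-hand side of (iii) produces the family $\{F(a)\cup G(b)\cup H(c):a\in A,\ b\in B,\ c\in C\}$ indexed over $(A\times B)\times C$, and the right-hand side yields the same collection of subsets of $X$ indexed over $A\times(B\times C)$; associativity of set union makes the triple union unambiguous, so the two families $\tau$ coincide and the two sides are equivalent. Part (v) is the same with $\cap$ in place of $\cup$. In every case the substantive content is the corresponding elementary property of $\cup$, $\cap$, and set complement on $2^{X}$, transported to soft sets through the family $\tau$.
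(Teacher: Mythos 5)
Your proposal is correct, and its computational core matches the paper's: part (i) is proved by the identical double-complement calculation, and part (ii) rests on the same identity $F(a)\cup G(b)=G(b)\cup F(a)$. The genuine difference lies in how the conclusion of (ii)--(v) is justified. The paper proves only (i) and (ii), and for (ii) it infers the stated equality from Molodtsov's informal principle that parameters are merely auxiliary names, so that $(a,b)\in A\times B$ and $(b,a)\in B\times A$ are ``indistinguishable''; this is a philosophical appeal rather than a verification of either of the paper's two formal relations, since under the paper's own definition of equality (which requires $S=S'$ \emph{and} $A=A'$) the claim fails whenever $A\times B\neq B\times A$. You instead adopt the only reading under which the theorem is literally true: ``$=$'' in (ii)--(v) is the equivalence $\cong$, defined by equality of the families $\tau$, and you verify it by the bijection $\sigma(a,b)=(b,a)$ for the commutativity laws and by unfolding $(A\times B)\times C$ versus $A\times(B\times C)$ for the associativity laws, in each case checking that both sides have the same image in $2^{X}$. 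Your route buys rigor and completeness: every step invokes only the paper's defined notions, and it covers parts (iii) and (v), which the paper explicitly declines to prove. The one caveat is that you are thereby proving the theorem with ``$=$'' weakened to ``$\cong$''; that is plainly what the paper's preceding discussion intends, but it changes the surface statement, so it should be flagged explicitly --- as you indeed do at the outset.
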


\begin{proof}
\noindent
We only prove $(i)$ and $(ii)$. \\
(i) For the  complement $C(F,A)=(W,A)$ of $(F,A)$, the set of parameters remains the same and the mapping is given by $W(a)=X\setminus F(a)$, for $a\in A$. Again, taking complement of the set $(W,A)$, we get, $C(W,A)=(U,A)$ and the mapping is given by $U(a)=X\setminus W(a)=X\setminus \{X \setminus F(a)\}= F(a), $ for $a\in A$. Thus, $C(C(F,A))=(F,A)$.\\ \\
(ii) Let $(F,A)\cup (G,B)=(H,A\times B)$. Then, the set of parameters for $(F,A)\cup (G,B)$ is $A\times B$, and the corresponding mapping is given by $H(a,b)=F(a)\cup G(b)$, where $(a,b)\in A\times B$. Again, we consider $(G,B)\cup (F,A)=(I,B\times A)$. In this case, the set of parameter is $B\times A$, and the corresponding mapping is given by $I(b,a)=G(b)\cup F(a)$. Since, according to Molodtsov \cite{2}, the sets of parameters are auxiliaries in case of soft sets, thus $A\times B$ and $B\times A$ are indistinguishable in the sense that $(a,b)\in A\times B$ and $(b,a)\in B\times A$ are indistinguishable. So, $H(a,b)=I(b,a)$ because $F(a)\bigcap G(b)=G(b)\bigcap F(a).$ Hence, $(F,A)\cup (G,B)= (G,B)\cup (F,A)$.\\ 
   
\end{proof}

\section{Similarity between two soft sets}
In this section, we define similarity measure between two soft sets $(F,A)$ and $(G,B)$ defined over a universal set $X$. We consider the concept of matrix representation of a soft set while measuring the similarity between two soft sets. For real world phenomena, we consider both the universal set and the set of parameters for a soft set to be finite. 

\begin{definition}

Let $X$ be a universal set, $A$ and $B$ are two sets of attributes such that $|X|=m$, $|A|=n$ and $|B|=p$. Here, $|K|$ denotes the cardinality of a set  $K$. Without loss of generality, we consider $n\ge p$. Also, we assume $Y=(y_{ij})_{m\times n}$ and $Z=(z_{ij})_{m\times p}$ to be the matrices of $(F,A)$ and $(G,B)$ respectively. Now, we consider following two cases.
\subsection*{case 1 : when $n=p$}
If $n=p$, then we construct a new matrix $D=(d_{ij})$ which depends on matrices $Y$ and $Z$ in the following way:
\begin{center}
    
$D=\begin{cases}
d_{ij}=1, \ if \ y_{ij}=z_{ij}\\
d_{ij}=0, \ if \ y_{ij}\neq z_{ij}
\end{cases}$

\end{center}
Then, similarity measure between the two soft sets $(F,A)$ and $(G,B)$  is denoted by $Sim\{(F,A),(G,B)\} $ and defined as:
\begin{center}
    $Sim\{(F,A),(G,B)\}=\frac{{\sum_{i,j}}\{ d_{ij} \ :\ d_{ij}=1\}}{|X|\times|A|}$.
\end{center} 

\subsection*{case 1 : when $n>p$}
Suppose $n-p=t$, i.e., the matrix $Z=(z_{ij})_{m\times p}$ has exactly $t$ columns more than the matrix $Y=(y_{ij})_{m\times n}$. Now, we first consider the submatrix $W=(w_{ij})_{m\times n}$ of $Z$ with the first  $n$ columns of $Z$ and with all the rows of $Z$ i.e., with $m$ rows. Now, the matrices $Y$ and $W$ are of the same order. Again, let us define a new matrix $D=(d_{ij})_{m\times n}$ as follows.
\begin{center}
    
$D=\begin{cases}
d_{ij}=1, \ if \ y_{ij}=w_{ij}\\
d_{ij}=0, \ if \ y_{ij}\neq w_{ij}
\end{cases}$
\end{center}
\noindent
Next, we consider the remaining submatrix of $Z$ with $m$ rows and $(p-t)$ columns beginning from the $(t+1)^{th}$ column up to the $p^{th}$ column. We  denote it by $M=(m_{ij})_{m\times (p-t)}$, which is of order $m\times (p-t)$. Now, we construct a zero matrix $N=(n_{ij})_{m\times (p-t)}$, in which all the entries are 0 and is of order $m\times (p-t)$. Again, we define the matrix $C=(c_{ij})_{m\times (p-t)}$ as follows. 
\begin{center}
    
$C=\begin{cases}
c_{ij}=1, \ if \ m_{ij}=n_{ij}=0,\\
c_{ij}=0, \ otherwise.
\end{cases}$
\end{center}
Then, similarity measure between the two soft sets $(F,A)$ and $(G,B)$ is defined as below.
\begin{center}
    $Sim\{(F,A),(G,B)\}=\frac{\sum_{i,j}\{ d_{ij} \ :\ d_{ij}=1\}+ \sum_{i,j}\ \{ c_{ij}\ : \ c_{ij}=1\}}{|X|\times max\{|A|,|B|\}}$
\end{center}

\end{definition}
\noindent
These two similarity measures are independent of the set of attributes chosen and they only depend on the families $\tau(F,A)$ and $\tau(G,B)$. 
Two soft sets is said to be completely similar if their similarity measure is 1 or completely dissimilar if their similarity measure is 0. Thus, we obtain similarity measures using Molodtsov's ideas \cite{2} as stated above.
Let us consider an illustrative example.
\begin{example}
Let $M$ and $N$ be two matrices of the soft sets $(F,A)$ and $(G,B)$.
\begin{center}
    
$M=
\begin{pmatrix}
1 & 0 & 1\\
1 & 0 & 0\\
1 & 0 & 1
\end{pmatrix}$
$N=
\begin{pmatrix}
0 & 1 & 1 & 0\\
1 & 0 & 0 & 1\\
1 & 1 & 1 & 0
\end{pmatrix}$

\end{center}
\noindent 
Thus,
$d_{11}=0$,
$d_{12}=0$, 
$d_{13}=1$, 
$d_{21}=1$, 
$d_{22}=1$, 
$d_{23}=1$
$d_{31}=1$, 
$d_{32}=0$, and 
$d_{33}=1$. 
Now, we construct the matrix $D=(d_{ij})_{3 \times 3}$, $(i=1,2,3)$, $(j=1,2,3)$ of order $3\times 3$ as follows.
\begin{center}
    
$D=
\begin{pmatrix}
0 & 0 & 1\\
1 & 1 & 1\\
1 & 0 & 1
\end{pmatrix}$
\end{center}
Next, we construct a zero matrix $R=(r_{ij})_{3 \times 1}$, and a column matrix $S=(s_{ij})_{3 \times 1}$, each of order $3\times 1$, where $i=1$, and $j=1,2,3$. It is easy to observe that $S$ is the submatrix of $N$  consisting of $4^{th}$ column of $N$. 
\begin{center}
    
$R=
\begin{pmatrix}
0 \\
0\\
0 
\end{pmatrix}$,
$S=
\begin{pmatrix}
0 \\
1 \\
0 
\end{pmatrix}$
\end{center}
Thus,
$c_{11}=1$,
$c_{21}=0$, and
$c_{31}=1$.
Now, construct the matrix $C$ as follows.\\

\begin{center}
    
$C=
\begin{pmatrix}
1 \\
0 \\
1 
\end{pmatrix}$

\end{center}
Hence,
\begin{center}
    $Sim\{(F,A),(G,B)\}=\frac{\sum_{i,j}\{ d_{ij} \ :\ d_{ij}=1\}+ \sum_{i,j}\ \{ c_{ij}\ : \ c_{ij}=1\}}{|X| \times max\{|A|,|B|\}}= \frac{2}{3}$.
    
\end{center}

\end{example}

\begin{theorem}
Let $(F,A)$ and $(G,B)$ be two soft sets over a universal set $X$. Then, the following results hold:\\
(i) $0\le Sim\{(F,A),(G,B)\}\le 1$\\
    (ii) $Sim\{(F,A),(G,B)\}=Sim\{(G,B),(F,A)\}$\\
   (iii) $Sim\{(F,A),(F,A)\}=1$

\end{theorem}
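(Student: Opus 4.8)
The plan is to treat both branches of Definition~5.1 through a single bookkeeping device and then read off each of the three claims from the resulting counting formula. Write $q=\min\{|A|,|B|\}$ and $Q=\max\{|A|,|B|\}$, and observe that in either case the numerator of $Sim$ is a sum of $0$--$1$ valued entries: in Case~1 (here $|A|=|B|$, so $n=q=Q$) it is the number of ones among the $mn$ entries of $D$, while in Case~2 it is the number of ones among the $mq$ entries of $D$ together with the number of ones among the $m(Q-q)$ entries of $C$. In both situations the numerator counts matching positions out of a total of $mQ=|X|\cdot\max\{|A|,|B|\}$ positions, which is exactly the denominator. Establishing this ``numerator counts a subset of all $mQ$ positions'' reading is the organizing step; the three parts then follow quickly.

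For (i), since every $d_{ij}$ and every $c_{ij}$ lies in $\{0,1\}$, the numerator is a nonnegative integer, giving $Sim\ge 0$. For the upper bound I would note that $D$ contributes at most $mq$ ones and $C$ contributes at most $m(Q-q)$ ones, so the numerator is at most $mq+m(Q-q)=mQ$, whence $Sim\le 1$; in Case~1 the term $C$ is absent and the bound reads numerator $\le mn=mQ$. For (iii), comparing $(F,A)$ with itself puts us in Case~1 with $Y=Z$, so $y_{ij}=z_{ij}$ for all $i,j$ and hence $d_{ij}=1$ at every position; the numerator equals $mn=|X|\cdot|A|$, the denominator, so $Sim\{(F,A),(F,A)\}=1$.

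For (ii), the point to stress is that numerator and denominator each depend only on the \emph{unordered} pair of matrices. The denominator $|X|\cdot\max\{|A|,|B|\}$ is visibly symmetric. For the numerator, the defining test $d_{ij}=1$ precisely when $y_{ij}=w_{ij}$ uses only the equality relation, which is symmetric; and in Case~2 the split into an overlapping block (the first $q$ columns of the larger matrix) and an excess block (its remaining $Q-q$ columns) singles out the matrix with more columns, a choice unaffected by interchanging the labels $(F,A)$ and $(G,B)$, while $C$ counts the zero entries of that same excess block irrespective of labelling. Hence both summands of the numerator are unchanged under the swap, and $Sim\{(F,A),(G,B)\}=Sim\{(G,B),(F,A)\}$.

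I expect the only real friction to be in (ii), and it is bookkeeping rather than mathematical depth: the Case~2 construction is phrased asymmetrically, since it explicitly dismantles one of the two matrices, so the argument must make explicit that the dismantled matrix is always the one with more columns and that every quantity entering the formula is therefore a function of the unordered pair alone. Once that observation is recorded, the equality of the two similarity values is immediate.
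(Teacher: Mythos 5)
Your proof is correct and takes essentially the same approach as the paper: the same case split on $|A|$ versus $|B|$, the same bound of the $0$--$1$ numerator by the total entry count $|X|\cdot\max\{|A|,|B|\}$ for (i), the same observation for (ii) that the construction only ever dismantles the matrix with more columns and so is unchanged by swapping labels, and direct computation for (iii). You merely fill in details the paper leaves implicit (it calls the $|A|=|B|$ case of (ii) and all of (iii) ``obvious''), so there is nothing to change.
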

\begin{proof} (i) Let, $|X|=m$, $|A|=n$  and $|B|=p$. Without loss of generality, we consider $n\ge p$. Let, $Y=(y_{ij})_{m \times n}$ and $Z=(z_{ij})_{m \times p}$ be the matrices of the soft sets $(F,A)$ and $(G,B)$ respectively. We consider following two cases.  \\ \\
\textbf{case 1($n=p$)}\\ \\

\begin{center}

$\sum_{i,j}\{ d_{ij} \ :\ d_{ij}=1\}\le |X|\times |A|$\\

$\implies \frac{\sum_{i,j}\{ d_{ij} \ :\ d_{ij}=1\}}{|X|\times |A|}\le 1$\\ 

$\implies Sim\{(F,A),(G,B)\}\le 1$

\end{center}

Also,

\begin{center}
$\sum_{i,j}\{ d_{ij} \ :\ d_{ij}=1\}\ge 0$\\

$\implies \frac{\sum_{i,j}\{ d_{ij} \ :\ d_{ij}=1\}}{|X|\times |A|}\ge 0$\\

$\implies Sim\{(F,A),(G,B)\}\ge 0$\\ 
\end{center}

\noindent     
\textbf{case 2($n>p$)}

\begin{center}

$\sum_{i,j}\{ d_{ij} \ :\ d_{ij}=1\}+ \sum_{i,j}\ \{ c_{ij}\ : \ c_{ij}=1\}\le |X|\times max\{|A|,|B|\}$\\

$\implies \frac{\sum_{i,j}\{ d_{ij} \ :\ d_{ij}=1\}+ \sum_{i,j}\ \{ c_{ij}\ : \ c_{ij}=1\}}{|X|\times max\{|A|,|B|\}}\le 1$\\

$\implies Sim\{(F,A),(G,B)\}\le 1$\\

\end{center}
Also, \\ 
\begin{center}

$\sum_{i,j}\{ d_{ij} \ :\ d_{ij}=1\}+ \sum_{i,j}\ \{ c_{ij}\ : \ c_{ij}=1\}\ge 0$\\

$\implies \frac{\sum_{i,j}\{ d_{ij} \ :\ d_{ij}=1\}+ \sum_{i,j}\ \{ c_{ij}\ : \ c_{ij}=1\}}{|X|\times max\{|A|,|B|\}}\ge 0$\\ 

$\implies Sim\{(F,A),(G,B)\}\ge 0$\\

\end{center}
Thus, combining the above results we get  $0\le Sim\{(F,A),(G,B)\}\le 1$.\\ \\
(ii) Let $M$ and $N$ be the matrices of order $m\times n$ and $m\times p$  of the soft sets $(F,A)$ and $(G,B)$  respectively. If $n=p$, then the result is obvious.\\
If $n>p$ and $n-p=t$, then we construct a zero matrix $N$ of order $m\times (n-t)$ to obtain the matrix $C$ as defined in definition 5.1. If $n<p$ and $p-n=t$, then similarly, we  construct the zero matrix $N'$ of order $m\times (p-t)$ to obtain the matrix $C$. In both the cases, we obtain the same matrix $C$. Hence, it  does not affect if we interchange the order of the soft sets for which we have to calculate the similarity measure.\\ \\
(iii) Directly follows from definition 5.1.
\end{proof}

\begin{theorem}

If $(F,A)$ be a soft set defined over a universal set $X$ and  $(CF,A)$ denotes the complement of the soft set $(F,A)$, then $Sim\{(F,A),(CF,A)\}=0$.
\end{theorem}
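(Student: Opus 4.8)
The plan is to reduce the whole computation to case 1 of the similarity measure in Definition 5.1. The crucial observation is that the complement $(CF,A)$ is defined over the \emph{same} parameter set $A$ as $(F,A)$, so if $|X|=m$ and $|A|=n$, both soft sets have matrices of identical order $m\times n$. Thus we are in the situation $n=p$, the first case of Definition 5.1 applies directly, and no padding with a zero submatrix (and hence no matrix $C$) is needed.

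First I would recall the matrix form of the soft complement from Subsection 4.1: if $Y=(y_{ij})_{m\times n}$ is the matrix of $(F,A)$, then the matrix $Z=(z_{ij})_{m\times n}$ of $(CF,A)$ satisfies $z_{ij}=1$ exactly when $y_{ij}=0$, and $z_{ij}=0$ exactly when $y_{ij}=1$. Since every entry of a binary matrix is either $0$ or $1$, this flip guarantees $z_{ij}\neq y_{ij}$ for every index pair $(i,j)$. Next I would form the comparison matrix $D=(d_{ij})$ of Definition 5.1: because $y_{ij}\neq z_{ij}$ holds at every cell, the rule ``$d_{ij}=1$ iff $y_{ij}=z_{ij}$'' forces $d_{ij}=0$ throughout. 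Consequently $\sum_{i,j}\{ d_{ij} : d_{ij}=1\}=0$, and substituting into the case 1 formula yields $Sim\{(F,A),(CF,A)\}=0/(|X|\times|A|)=0$, as claimed.

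There is essentially no obstacle in this argument; the only points requiring care are confirming that the equal cardinality of the parameter sets places us squarely in case 1, and that the bitwise nature of the complement produces a \emph{strict} mismatch in every single cell rather than merely in some cells. I would close by remarking that the conclusion is consistent with the stated interpretation that two soft sets with similarity measure $0$ are completely dissimilar, which accords with the intuition that a soft set and its complement should be maximally far apart.
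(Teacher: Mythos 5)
Your proof is correct and follows the only natural route: since $(CF,A)$ keeps the parameter set $A$, case 1 of Definition 5.1 applies, and the bitwise flip $z_{ij}=1-y_{ij}$ forces $d_{ij}=0$ in every cell, giving similarity $0$. The paper itself dismisses this result with ``The proof is obvious,'' so your argument simply supplies, correctly and completely, the details the authors chose to omit.
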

\begin{proof}
The proof is obvious.
\end{proof}

\begin{theorem}
Let  $(F,A)$ and $(G,B)$ be two soft sets defined over a universal set $X$. If the mappings $F$ and $G$ are one-one and $(F,A)\cong (G,B)$, then, $Sim\{(F,A),(G,B)\}=1$, provided the ordering of the attributes is independent of choice.
\end{theorem}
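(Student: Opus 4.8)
The plan is to observe first that the one-one hypothesis together with the equivalence $(F,A)\cong(G,B)$ forces the two parameter sets to have the same cardinality. Since $F$ is one-one, distinct parameters of $A$ are sent to distinct subsets of $X$, so the family $\tau(F,A)$ has exactly $|A|=n$ members; likewise $\tau(G,B)$ has exactly $|B|=p$ members. By the definition of soft set equivalence we have $\tau(F,A)=\tau(G,B)$, and equal finite sets have equal cardinality, whence $n=p$. This places the computation in Case 1 of definition 5.1, where the matrices $Y=(y_{ij})_{m\times n}$ and $Z=(z_{ij})_{m\times p}$ already share the common order $m\times n$.

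Next I would read off the meaning of the columns of these matrices. With the universe fixed as $X=\{x_1,\dots,x_m\}$, the $j$-th column of $Y$ is precisely the characteristic vector of the subset $F(a_j)$, and the $k$-th column of $Z$ is the characteristic vector of $G(b_k)$. Because $F$ and $G$ are one-one, the columns of $Y$ are pairwise distinct and enumerate exactly the $n$ elements of $\tau(F,A)$, while the columns of $Z$ enumerate exactly the $n$ elements of $\tau(G,B)$. The equality $\tau(F,A)=\tau(G,B)$ therefore says that $Y$ and $Z$ have the same set of columns.

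At this point I would invoke the clause that the ordering of the attributes is free, and choose the ordering of $B$ so that $G(b_j)=F(a_j)$ for every $j$. Such an ordering exists: the equality of the two $n$-element families supplies, for each $F(a_j)$, a unique $b_j\in B$ with $G(b_j)=F(a_j)$, the uniqueness coming from the injectivity of $G$; this is a genuine bijection of columns. Under this aligned ordering the $j$-th columns of $Y$ and $Z$ are characteristic vectors of one and the same subset over the same ordered universe, so $y_{ij}=z_{ij}$ for all $i,j$, and hence the comparison matrix of Case 1 satisfies $d_{ij}=1$ everywhere. Summing yields $\sum_{i,j}\{d_{ij}:d_{ij}=1\}=m\cdot n=|X|\cdot|A|$, so that
\[
Sim\{(F,A),(G,B)\}=\frac{|X|\cdot|A|}{|X|\cdot|A|}=1.
\]

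The one-one hypothesis is doing the essential work, and this is where I expect the only real subtlety. Injectivity is what guarantees $n=p$, keeping us in Case 1 rather than drifting into the Case 2 formula with a genuine mismatch in the number of columns, and it is what turns the coincidence of the two families into an honest column-by-column bijection rather than a correspondence that might collapse repeated subsets. Without it, equivalent soft sets can have matrices of different widths and similarity strictly below $1$, so the care in the write-up should go into the $n=p$ step and the existence of the aligning reordering; once those are in place the final computation is immediate.
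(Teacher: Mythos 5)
Your proposal is correct and follows essentially the same route as the paper's own proof: injectivity plus $\tau(F,A)=\tau(G,B)$ gives $|A|=|B|$, so the matrices have the same order, and the freedom to reorder attributes lets you align the columns (characteristic vectors of the common family) so that the two matrices coincide, giving $Sim\{(F,A),(G,B)\}=1$. The only difference is that you make explicit the steps the paper leaves implicit, namely that injectivity forces $|\tau(F,A)|=|A|$ and that the column alignment is a genuine bijection.
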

\begin{proof}
For two soft sets $(F,A)$ and $(G,B)$ defined over a universal set $X$, $(F,A)\cong (G,B)$ implies $\tau(F,A)$ = $\tau(G,B)$. Since $\tau(F,A)=\tau(G,B)$, then the mappings $F$ and $G$ are one-one imply that the $|A|=|B|$. It implies that the matrices of both the soft sets will be of same order. Now, an element of $\tau(F,A)$  represents one column in the respective matrix of $(F,A)$ and we obtain a matrix similarly for $(G,B)$. It is also given that the ordering of the attributes is independent of choice. Hence, $\tau(F,A)$ = $\tau(G,B)$ implies that all the columns in the matrices of the two soft sets are identical. Thus, $Sim\{(F,A),(G,B)\}=1$.
\end{proof}
\noindent
Now, we consider a case where $\tau(F,A)\cap \tau (G,B)= \phi$, for two soft sets $(F,A)$ and $(G,B)$. For the practical feasibility, let us discuss a situation by a real life example. Granular computing is an emerging computing paradigm of information processing that concerns the processing of complex information entities called ``information granules", which arise in the process of data abstraction and derivation of knowledge from information or data \cite{18}.

\begin{figure}
    \centering
    \includegraphics[width=14cm, height=8cm]{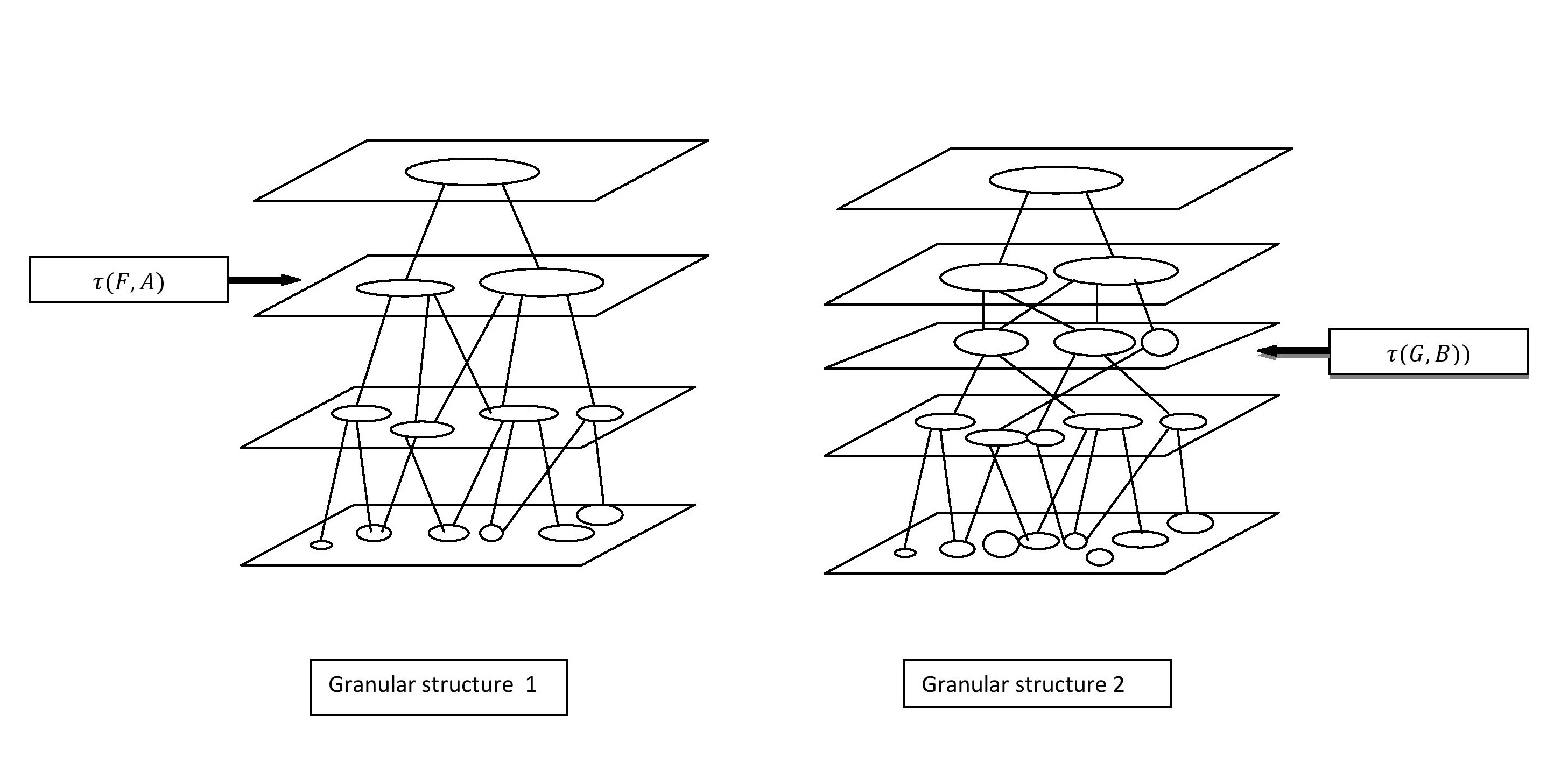}
    \caption{Representations of two granular structures}
    \label{1}
\end{figure}

\noindent
In this process, there are granules and several granular layers in a granular structure. A granule may be a subset, class, object or cluster of a universe \cite{17}. In granular computing \cite{18, 19}, we may construct a granular layer by using  a soft set say $(F,A)$ and the granules of the layer may be represented by the elements of  $\tau(F,A)$ of the soft set $(F,A)$. Then, we can have similarity measures   between the different layers formed by different soft sets. It is often feasible to find that the information in a specific layer of a granular structure does not match with an another layer of a different granular structure. In spite of it, we cannot conclude that there is no similarity between the two layers of different granules. For example, we consider two granular layers as shown in figure \ref{1} represented by   two soft sets $(F,A)$ and $(G,B)$ defined over the same universe  $X=\{a,b,c\}$. The elements in the families $\tau(F,A)$ and $\tau(G,B)$ will represent the granules in two granular layers. Suppose $\tau(F,A)=\{\{a,b\},\{c\}\}$ and $\tau(G,B)=\{\{a,c\},\{b\},
\{d,e\}\}$ are representing  the sets of two and three granules in granular structure 1 and granular structure 2 respectively as shown in the figure \ref{1}. Here, $\tau(F,A)\cap \tau (G,B)= \phi$, but we can not say that there is no similarity between the granular layers because the elements $a$, $b$ and $c$ are in some granules in both the layers. From this discussions, we conclude the following theorem which may be useful for granular computing.\\

\begin{theorem}

If $(F,A)$ and $(G,B)$ be two soft sets defined over a universal set $X$, then $\tau(F,A)\cap \tau (G,B)= \phi$ does not imply that $Sim\{(F,A), (G,B)\}=0$ in general.
\end{theorem}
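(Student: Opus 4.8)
The statement asserts only that an implication fails \emph{in general}, so the natural and essentially unavoidable strategy is to exhibit a single explicit counterexample: two soft sets whose families $\tau(F,A)$ and $\tau(G,B)$ are disjoint, yet whose similarity measure is strictly positive. The plan is therefore to fix a small finite universe, design the two $\tau$-families to share no common subset (so that the hypothesis $\tau(F,A)\cap\tau(G,B)=\phi$ holds), while simultaneously arranging that the corresponding binary matrices agree in at least one entry (so that $Sim>0$). No general argument is needed or expected, since the conclusion is a non-implication; one well-chosen instance settles it.

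Concretely, I would take $X=\{a,b,c\}$ with the ordering $(a,b,c)$ fixed once and for all, and choose soft sets $(F,A)$, $(G,B)$ realizing $\tau(F,A)=\{\{a,b\},\{c\}\}$ and $\tau(G,B)=\{\{a,c\},\{b\}\}$. The first step is to verify disjointness directly: none of $\{a,b\},\{c\}$ equals either $\{a,c\}$ or $\{b\}$, so $\tau(F,A)\cap\tau(G,B)=\phi$. The second step is to write the matrices under the fixed row ordering,
\[
Y=\begin{pmatrix} 1 & 0 \\ 1 & 0 \\ 0 & 1 \end{pmatrix},\qquad
Z=\begin{pmatrix} 1 & 0 \\ 0 & 1 \\ 1 & 0 \end{pmatrix},
\]
where the columns of $Y$ encode $\{a,b\}$ and $\{c\}$, and the columns of $Z$ encode $\{a,c\}$ and $\{b\}$. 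Here $|A|=|B|=2$, so we fall into case~1 ($n=p$) of Definition~5.1.

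The third step is a direct application of the similarity formula. Forming $D=(d_{ij})$ by $d_{ij}=1$ iff $y_{ij}=z_{ij}$, one checks that only the two entries in the first row coincide, giving $\sum_{i,j}\{d_{ij}:d_{ij}=1\}=2$, whence
\[
Sim\{(F,A),(G,B)\}=\frac{2}{|X|\times|A|}=\frac{2}{3\times 2}=\frac{1}{3}\neq 0 .
\]
This exhibits a pair with disjoint $\tau$-families but nonzero similarity, establishing that $\tau(F,A)\cap\tau(G,B)=\phi$ does not force $Sim\{(F,A),(G,B)\}=0$, which proves the theorem.

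I do not anticipate a genuine obstacle; the one point requiring care is that the similarity measure of Definition~5.1 depends on the chosen orderings of the rows and columns of the two matrices. Because the theorem denies the implication only \emph{in general}, it suffices to present one fixed ordering for which the value is positive, and the ordering used above does the job. This is exactly the phenomenon anticipated by the granular-computing discussion preceding the statement: although the granules (the members of $\tau(F,A)$ and $\tau(G,B)$) are pairwise distinct, the underlying points $a,b,c$ recur across granules in both layers, which is precisely what makes several matrix entries agree and forces $Sim>0$.
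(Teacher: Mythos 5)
Your proposal is correct and follows essentially the same strategy as the paper, which proves the non-implication by exhibiting a concrete counterexample (Example 5.2, where $X=\{a,b,c,d,e\}$, $\tau(F,A)=\{\{a,b\},\{e\}\}$, $\tau(G,B)=\{\{b,c\},\{c,d,e\}\}$, and $Sim\{(F,A),(G,B)\}=\frac{3}{5}$). Your smaller example with $\tau(F,A)=\{\{a,b\},\{c\}\}$, $\tau(G,B)=\{\{a,c\},\{b\}\}$ and $Sim=\frac{1}{3}$ is computed correctly under case 1 of Definition 5.1 and settles the claim in the same way.
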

\noindent
Let us consider an example to illustrate the above theorem.
\begin{example}
Let $(F,A)$ and $(G,B)$ be two soft sets representing two granular layers in two different granular structures defined over a universal set $X$.
Let, $X=\{a,b,c,d,e\}$, $A=\{m,n\}$, and $B=\{x,y\}$.
The mappings $F$ and $G$ are given by
$F(m)=\{a,b\}, F(n)=\{e\},$
$G(x)=\{b,c\}$, and $ G(y)=\{c,d,e\}$.
Hence, $\tau(F,A)=\{\{a,b\},\{e\}\}$ and
$\tau(G,B)=\{\{b,c\},\{c,d,e\}\}$.
In these two families, each element will represent a granule in the granular layer of the respective granular system.
We find that $\tau(F,A)\cap \tau(G,B)=\phi$. It is now easy to find that
$Sim\{(F,A),(G,B)\}=\frac{3}{5}$.

\end{example}

\noindent
Now, let us now consider some properties of soft sets with respect to the internal and external approximations  of Molodtsov \cite{3}.
\begin{definition}
Let $M$ be  a matrix of a soft set $(F,A)$ defined over a universal set $X$. We consider $A=\{e_{1},e_{2}, \dots, e_{n}\}$, i.e., $|A|=n$. Then, gravity of an attribute $e_i\in A$ is  denoted by $\chi (e_1)$ and it is defined  to be the total number of 1's in the ordered $i^{th}$ column of the  matrix $M$ of $(F,A)$, where, $i=1,2, \dots, n$.
\end{definition}

\begin{theorem}
Consider two soft sets $(F,A)$ and $(G,B)$ defined over a universal set $X$ such that $(F,A)\subseteq (G,B)$. Let $|A|=m$ and $|B|=n$. Then, $\chi(e_i) \leq \chi (g_j)$  in some order, where $i=1,2,\dots m$ and $j=1,2,\dots n$.
\end{theorem}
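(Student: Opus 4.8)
The plan is to translate the set-theoretic content of the relation $(F,A)\subseteq(G,B)$ into cardinality inequalities and then read those inequalities off the matrices through the notion of gravity. The first thing I would record is that for any attribute $e_i\in A$ the gravity $\chi(e_i)$ equals $|F(e_i)|$: by the construction of the matrix $M$, its $i^{\mathrm{th}}$ column carries a $1$ in the row indexed by $x_k$ precisely when $x_k\in F(e_i)$, so the number of $1$'s in that column is exactly the number of elements of $X$ that lie in $F(e_i)$. The same identity gives $\chi(g_j)=|G(g_j)|$ for the matrix of $(G,B)$. This reduces the entire statement to a comparison of the cardinalities $|F(e_i)|$ and $|G(g_j)|$, which is precisely where the hypothesis $(F,A)\subseteq(G,B)$ enters.

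Next I would unwind the definition of internal approximation: $(F,A)\subseteq(G,B)$ means that for every $g_j\in B$ with $G(g_j)\neq\phi$ there is an $e_i\in A$ satisfying $\phi\neq F(e_i)\subseteq G(g_j)$. From the inclusion $F(e_i)\subseteq G(g_j)$ one immediately gets $|F(e_i)|\le|G(g_j)|$, that is $\chi(e_i)\le\chi(g_j)$. Hence the internal approximation relation produces, for each $g_j$ with nonempty image, a distinguished attribute $e_i$ of $A$ whose column gravity does not exceed that of $g_j$. Collecting these choices yields a correspondence $g_j\mapsto e_i$ under which every matched pair satisfies the asserted inequality, and this is the ordering to which the theorem refers.

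The step I expect to be the main obstacle is making the phrase ``in some order'' precise and defensible, since the correspondence supplied by internal approximation need not be a bijection between the index sets $\{1,\dots,m\}$ and $\{1,\dots,n\}$. A single $e_i$ may serve as the witness for several different $g_j$, and the definition places no restriction on those images $F(e_i)$ that fail to be contained in any $G(g_j)$; consequently one cannot expect a literal column-by-column domination of one matrix by the other. I would therefore phrase the conclusion as the existence of a (not necessarily injective) assignment $j\mapsto i$ with $\chi(e_i)\le\chi(g_j)$ for every admissible $j$. A minor point still to be settled is the treatment of attributes $g_j$ with $G(g_j)=\phi$, for which the definition offers no witness: these carry $\chi(g_j)=0$ and are simply omitted from the correspondence, as the hypothesis imposes nothing on them.
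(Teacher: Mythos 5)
Your argument is correct, and there is in fact nothing in the paper to compare it against: the paper states this theorem bare, with no proof at all, passing directly to a corollary and an illustrative example. So your proposal supplies what the paper omits. The chain you use is the right one, and essentially the only one available: by the definition of gravity, $\chi(e_i)=|F(e_i)|$ and $\chi(g_j)=|G(g_j)|$ (the $i^{th}$ column of the matrix has a $1$ exactly in the rows of elements of $F(e_i)$, and the paper works with finite $X$, so counting is legitimate); the definition of internal approximation hands you, for each $g_j$ with $G(g_j)\neq\phi$, a witness $e_i$ with $\phi\neq F(e_i)\subseteq G(g_j)$; and monotonicity of cardinality under inclusion gives $\chi(e_i)\le\chi(g_j)$.

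Your caveats about the phrase ``in some order'' are not pedantry; they locate the actual content of the statement, and your weaker precisification (a not-necessarily-injective assignment $j\mapsto i$, defined only for those $j$ with $G(g_j)\neq\phi$) is exactly what the hypothesis can deliver. It is worth noting that the paper's own illustrative example following this theorem suggests the opposite, stronger reading --- an injective assignment $e_i\mapsto g_j$ defined on all of $A$ --- and that reading is false in general, for precisely the reason you identify: the definition places no constraint on an $F(e_i)$ that is contained in no $G(g_j)$. Concretely, take $X=\{x_1,x_2,x_3\}$, $A=\{e_1,e_2\}$ with $F(e_1)=\{x_1\}$ and $F(e_2)=X$, and $B=\{g_1\}$ with $G(g_1)=\{x_1\}$. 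Then $(F,A)\subseteq(G,B)$ holds with $e_1$ as the witness for $g_1$, yet $\chi(e_2)=3>1=\chi(g_1)$, so $e_2$ is dominated by nothing in $B$ (and indeed no injection $A\to B$ even exists). Your proof of the defensible version of the statement is complete.
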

\begin{corollary}
If $(F,A) \subseteq (G,B)$ and $|A|=|B|=n$, then $\chi(e_i) \leq \chi(g_i)$, where $i=1,2,\dots n$, if the ordering of attributes is independent of choice.
\end{corollary}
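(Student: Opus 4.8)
The plan is to deduce the corollary from Theorem 5.5 by upgrading its ``in some order'' comparison to a coordinate-wise one, using the equal cardinality $|A|=|B|=n$ together with the freedom to reorder attributes. First I would recall from Definition 5.2 that the gravity of an attribute is simply the cardinality of the subset it names, namely $\chi(e_i)=|F(e_i)|$ and $\chi(g_j)=|G(g_j)|$, since each counts the $1$'s in the corresponding column of the associated binary matrix. The only monotonicity fact needed is that set inclusion forces a cardinality inequality: if $F(e_i)\subseteq G(g_j)$, then $|F(e_i)|\le|G(g_j)|$, that is $\chi(e_i)\le\chi(g_j)$.

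Next I would unwind the hypothesis $(F,A)\subseteq(G,B)$ through the definition of internal approximation: for every $g_j\in B$ with $G(g_j)\neq\phi$ there exists some $e_i\in A$ with $\phi\neq F(e_i)\subseteq G(g_j)$. Combined with the monotonicity fact, this produces for each such $j$ an index $i$ with $\chi(e_i)\le\chi(g_j)$, which is exactly the ``some order'' correspondence already furnished by Theorem 5.5. The remaining task is to show that, when $|A|=|B|=n$, this correspondence can be realized as a permutation $\sigma$ of $\{1,\dots,n\}$, so that relabelling the columns of the matrix of $(G,B)$ (legitimate because the ordering of the attributes is independent of choice) yields $\chi(e_i)\le\chi(g_{\sigma(i)})$ and hence, after renaming, $\chi(e_i)\le\chi(g_i)$ for every $i$.

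The main obstacle is precisely this last step of producing a bijective matching. Read verbatim, internal approximation only supplies a map from the nonempty $G$-columns into the $F$-columns, and this map need be neither injective nor surjective, so equal cardinality by itself does not force a permutation. I would model the situation as a bipartite graph joining each $g_j$ to every $e_i$ with $F(e_i)\subseteq G(g_j)$ and try to extract a system of distinct representatives via Hall's marriage theorem; the real content would be verifying Hall's condition, that any $k$ of the attributes $g_j$ are collectively ``contained-in'' by at least $k$ of the attributes $e_i$. I expect this to be the genuinely delicate point, since it can fail even when $F$ and $G$ are one-one: if some $F$-column has strictly larger gravity than every $G$-column, no permutation can accommodate it and the coordinate-wise inequality becomes impossible. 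I would therefore settle the corollary by reading ``the ordering of the attributes is independent of choice'' as the working hypothesis that the two parameter sets can be indexed so that $F(e_i)\subseteq G(g_i)$ holds column by column; granting this, $\chi(e_i)=|F(e_i)|\le|G(g_i)|=\chi(g_i)$ for every $i$ is immediate from cardinality monotonicity, and I would explicitly flag that absent such a reading the stated conclusion need not hold.
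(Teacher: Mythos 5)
Your analysis is correct, and the obstacle you isolate is not a gap in your own argument but a flaw in the paper: the corollary is given no proof at all (it is stated bare after Theorem 5.5, which is itself unproven and asserts only an inequality ``in some order''), and as literally stated it is false. The scenario you flag is realizable under every hypothesis of the corollary. Take $X=\{x,y,z\}$, $A=\{e_1,e_2\}$, $B=\{g_1,g_2\}$ with $F(e_1)=\{x\}$, $F(e_2)=X$, $G(g_1)=\{x,y\}$, $G(g_2)=\{x\}$. Both $F$ and $G$ are one-one, and $(F,A)\subseteq (G,B)$ holds by the paper's definition of internal approximation, since $F(e_1)$ is a nonempty subset of both $G(g_1)$ and $G(g_2)$; yet $\chi(e_2)=3$ while $\chi(g_1)=2$ and $\chi(g_2)=1$, so neither of the two possible orderings of the attributes yields $\chi(e_i)\le\chi(g_i)$ for $i=1,2$. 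This same example refutes the strong reading of Theorem 5.5 (the one suggested by the example following the corollary, in which every $e_i$ is matched injectively to some $g_j$): internal approximation constrains only the $G$-columns and says nothing about $F$-columns, such as $F(e_2)$ here, that are contained in no $G(g_j)$. So Hall's condition genuinely fails, exactly as you suspected, and no reading of ``the ordering of attributes is independent of choice'' as mere freedom to permute columns can rescue the statement.

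Given that, your concluding move --- reading the ordering hypothesis as the assumption that the attributes can be indexed so that $F(e_i)\subseteq G(g_i)$ for each $i$, whence $\chi(e_i)=|F(e_i)|\le |G(g_i)|=\chi(g_i)$ by cardinality monotonicity --- is mathematically valid but is a reinterpretation that assumes essentially all of the content to be proved; nothing in the paper's definitions licenses it. The only conclusion the stated hypotheses actually support is the weak one your bipartite-graph picture makes evident: for every $j$ with $G(g_j)\neq\phi$ there exists some $i$ with $\chi(e_i)\le\chi(g_j)$, equivalently $\min_i\chi(e_i)\le\chi(g_j)$ for all such $j$. A coordinate-wise comparison after relabelling is unobtainable, so you were right to flag, rather than paper over, this step.
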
 
\noindent
Let us consider an illustrative example.
\begin{example}
Let $(F,A)$ and $(G,B)$ be two soft set defined over a universal set $X$. Let $A=\{e_1,e_2,e_3\}$ and $B=\{g_1,g_2,g_3,g_4\}$. If $M$ and $N$ are the matrices of $(F,A)$ and $(G,B)$ respectively, where,

\begin{center}
    
$M=
\begin{pmatrix}
1 & 1 & 1\\
1 & 1 & 0\\
0 & 1 & 0
\end{pmatrix}$,
$N=
\begin{pmatrix}
1 & 1 & 1 & 1 \\
1 & 0 & 0 & 1\\
0 & 0 & 1 & 1
\end{pmatrix}$

\end{center}
then obviously, $(F,A)\subseteq (G,B)$.
Hence, $\chi(e_1)= \chi(g_1)$, $\chi(e_2)= \chi(g_4)$, $\chi(e_3)\leq \chi(g_3)$.

\end{example}
\begin{corollary}
Let $(F,A)$ and $(G,B)$ be two soft set defined over a universal set $X$ and $(F,A)\subseteq (G,B)$. Let $A=\{e_1,e_2,\dots, e_m\}$ and $B=\{g_1,g_2,\dots, g_n\}$. Then, it is not always true that $\sum_{i=1}^m \chi(e_i) \leq \sum_{j=1}^n \chi (g_j)$.
\end{corollary}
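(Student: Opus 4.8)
Since the statement asserts that an inequality \emph{fails to hold in general}, the natural strategy is not to argue universally but to exhibit a single pair of soft sets satisfying $(F,A)\subseteq(G,B)$ that nevertheless violates $\sum_{i=1}^m\chi(e_i)\le\sum_{j=1}^n\chi(g_j)$. The plan is first to rewrite each gravity as a set cardinality, and then to exploit the fact that internal approximation imposes no injectivity between the two parameter sets.

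First I would record the elementary observation that, by Definition 5.2, $\chi(e_i)$ is the number of $1$'s in the $i$-th column of the matrix of $(F,A)$, which by the construction of that matrix equals $|F(e_i)|$; likewise $\chi(g_j)=|G(g_j)|$. Hence $\sum_{i=1}^m\chi(e_i)=\sum_{i=1}^m|F(e_i)|$ and $\sum_{j=1}^n\chi(g_j)=\sum_{j=1}^n|G(g_j)|$, so it suffices to display soft sets for which $(F,A)\subseteq(G,B)$ holds while $\sum_i|F(e_i)|>\sum_j|G(g_j)|$.

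The mechanism to exploit is that internal approximation (Definition 2.4) only demands that each nonempty $G(g)$ contain \emph{some} nonempty $F(e)$; it permits $|A|$ to exceed $|B|$ and allows several distinct attributes of $A$ to approximate one and the same attribute of $B$. Accordingly I would take $X=\{a,b\}$, $A=\{e_1,e_2,e_3\}$ with $F(e_1)=F(e_2)=F(e_3)=\{a\}$, and $B=\{g_1\}$ with $G(g_1)=\{a,b\}$. Since $\phi\neq\{a\}\subseteq\{a,b\}$, the lone nonempty set $G(g_1)$ is internally approximated, so $(F,A)\subseteq(G,B)$ holds; yet $\sum_i\chi(e_i)=1+1+1=3$ whereas $\sum_j\chi(g_j)=2$, and the claimed inequality is false.

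There is no essential obstacle here, since a single counterexample settles a ``not always true'' assertion outright; the only point deserving care is to check that the example does not contradict the gravity bound of Theorem 5.5. That theorem yields only a pointwise estimate $\chi(e_i)\le\chi(g_j)$ under a (not necessarily injective) matching, and here indeed each $\chi(e_i)=1\le 2=\chi(g_1)$. The summed inequality collapses precisely because all three attributes of $A$ are matched to the single attribute of $B$, so the bound survives pointwise but not after summation --- and it is exactly this many-to-one matching that I would highlight when presenting the example.
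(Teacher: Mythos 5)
Your proof is correct and takes essentially the same route as the paper: the paper's own justification (Example 5.3) is likewise a single counterexample in which many attributes of $A$ are mapped to singletons, all contained in the images of fewer attributes of $B$, so the pointwise gravity bound of Theorem 5.5 holds under a many-to-one matching while the summed inequality fails. Your example is simply a smaller, cleaner instance of the same mechanism.
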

\noindent
The following example is used to support the preceding corollary.
\begin{example}
Let $(F,A)$ and $(G,B)$ be two soft set defined over a universal set $X$. Let $A=\{e_1,e_2,e_3\}$ and $B=\{g_1,g_2,g_3,g_4,g_5\}$. If $M$ and $N$ are the matrices of $(F,A)$ and $(G,B)$ respectively, where,

\begin{center}
$M=
\begin{pmatrix}
1 & 1 & 1 & 1 & 1 \\
0 & 0 & 0 & 0 & 0 \\
0 & 0 & 0 & 0 & 0
\end{pmatrix}$,
$N=
\begin{pmatrix}
1 & 1 & 1\\
0 & 0 & 0\\
0 & 1 & 0
\end{pmatrix}$

\end{center}
then obviously, $(F,A)\subseteq (G,B)$.
But,  $\sum_{i=1}^3 \chi(e_i)=5 \nleq \sum_{j=1}^5 \chi (g_j)=4$.
\end{example}

\begin{theorem}
Let  $(F,A)$ and $(G,B)$ be two soft sets defined over a universal set $X$ such that $(F,A)\supseteq (G,B)$. Let $|A|=m$ and $|B|=n$. Then, $\chi(e_i)  \geq \chi (g_j) $ in some order, where $i=1,2,\dots m$ and $j=1,2,\dots n $.
\end{theorem}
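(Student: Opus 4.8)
The plan is to reduce this statement to its internal-approximation counterpart (Theorem 5.5) by passing to complements, rather than re-deriving everything from scratch. First I would record the key duality: for soft sets over $X$, one has $(F,A)\supseteq (G,B)$ if and only if $C(F,A)\subseteq C(G,B)$. This is immediate from the definitions by De Morgan's laws. Writing $\bar F(a)=X\setminus F(a)$ and $\bar G(b)=X\setminus G(b)$, the three clauses in the definition of external approximation translate termwise, since $G(b)\neq X \iff \bar G(b)\neq\phi$, since $F(a)\neq X \iff \bar F(a)\neq\phi$, and since $F(a)\supseteq G(b)\iff \bar F(a)\subseteq \bar G(b)$. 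Thus the condition defining $(F,A)\supseteq(G,B)$ is exactly the condition defining the internal approximation $C(F,A)\subseteq C(G,B)$.

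Next I would relate gravities under complementation. Since the complement operation flips every entry of the matrix (Section 4.1, Soft complement), the $i$-th column of the matrix of $C(F,A)$ contains exactly $|X|-\chi(e_i)$ ones; hence the gravity of $e_i$ computed in $C(F,A)$ equals $|X|-\chi(e_i)$, and likewise the gravity of $g_j$ in $C(G,B)$ equals $|X|-\chi(g_j)$. Applying Theorem 5.5 to the pair $C(F,A)\subseteq C(G,B)$ then yields, in some order, $|X|-\chi(e_i)\le |X|-\chi(g_j)$, which rearranges to $\chi(e_i)\ge \chi(g_j)$ in that same order. Because complement is an involution (Theorem 4.1(i)) and leaves the parameter sets $A$ and $B$ unchanged, the index correspondence produced for the complemented sets transports back verbatim to $(F,A)$ and $(G,B)$.

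As a direct cross-check I would also unwind the definition without complements: for each $g_j\in B$ with $G(g_j)\neq X$, the relation $(F,A)\supseteq(G,B)$ supplies an $e_i\in A$ with $F(e_i)\supseteq G(g_j)$, and monotonicity of cardinality gives $\chi(e_i)=|F(e_i)|\ge |G(g_j)|=\chi(g_j)$, which is precisely the asserted inequality for the matched columns. This confirms that the duality reduction and the bare definition produce the same conclusion.

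The main obstacle is not the inequality itself but pinning down what ``in some order'' should mean and handling the boundary columns. The approximation relation only constrains columns $g_j$ with $G(g_j)\neq X$, and it furnishes a map from $B$ into $A$ that need not be injective, so I would be careful to phrase the conclusion as the existence of a column correspondence respecting the inequality rather than as a bijection of attributes. The complement reduction is attractive precisely here, since it packages this bookkeeping into Theorem 5.5, whose matching (as illustrated in Example 5.3) already encodes the intended reading; accordingly I would present the duality argument as the primary proof and relegate the direct computation to a concluding remark.
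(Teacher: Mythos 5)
Your argument is correct, but there is nothing in the paper to compare it against: the paper states this theorem without any proof at all, exactly as it does its internal counterpart (Theorem 5.5), on which your primary reduction relies. Two consequences follow. First, your duality $(F,A)\supseteq (G,B)\iff C(F,A)\subseteq C(G,B)$ is valid --- the clauses of Definition 2.5 do translate termwise into those of Definition 2.4 under $S(a)\mapsto X\setminus S(a)$, and gravities transform as $\chi\mapsto |X|-\chi$ --- but since Theorem 5.5 is itself unproved in the paper, this reduction transfers the burden to a statement with no more established status than the one being proved; your ``cross-check'' (for each $g_j$ with $G(g_j)\neq X$ pick $e_i$ with $X\neq F(e_i)\supseteq G(g_j)$, whence $\chi(e_i)=|F(e_i)|\ge |G(g_j)|=\chi(g_j)$) is the only self-contained proof on the table and should be the main argument, not a concluding remark. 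Second, your caveat about unconstrained columns is not mere bookkeeping but is essential to the truth of the statement: if $G(g_j)=X$ for some $j$, the external approximation imposes nothing on that column, and the example $A=\{e_1\}$, $F(e_1)=\{x_1\}$, $B=\{g_1\}$, $G(g_1)=X$ with $|X|\ge 2$ satisfies $(F,A)\supseteq (G,B)$ vacuously while $\chi(e_1)=1<|X|=\chi(g_1)$, so no matching of the kind claimed can exist under a reading that requires every $g_j$ to be matched. The theorem therefore holds only under the partial-matching interpretation you adopt, which is also the reading suggested by the paper's own Example 5.3, where not every attribute of the larger parameter set participates in the correspondence.
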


\begin{corollary}
If $(F,A) \supseteq (G,B)$ and $|A|=|B|=n$, then $\chi(e_i) \geq \chi(g_i)$, where $i=1,2,\dots n$, if the ordering of attributes is independent of choice.
\end{corollary}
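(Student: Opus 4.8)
The plan is to obtain this corollary directly from the preceding external-approximation theorem, in exact parallel with the internal-approximation corollary proved earlier in this section. First I would invoke that theorem: from $(F,A)\supseteq (G,B)$ it yields $\chi(e_i)\geq \chi(g_j)$ ``in some order'', which unwinds to the statement that the external approximation forces every $g_j$ with $G(g_j)\neq X$ into some $F(e_i)$ with $F(e_i)\neq X$; taking cardinalities of the resulting containment $F(e_i)\supseteq G(g_j)$ gives $\chi(e_i)\geq \chi(g_j)$ for the matched index pair.

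Next I would bring in the hypothesis $|A|=|B|=n$, so that the matrices of $(F,A)$ and $(G,B)$ have the same number $n$ of columns and the correspondence supplied by the theorem pairs columns of one matrix with columns of the other. Under the clause that ``the ordering of attributes is independent of choice'', the particular labels attached to the attributes are immaterial to the assertion, so I would relabel the columns of the two matrices to make each matched pair carry a common index. The per-pair inequality then becomes $\chi(e_i)\geq \chi(g_i)$ for $i=1,2,\dots ,n$, which is exactly the claim.

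The genuine obstacle is that the correspondence furnished by the theorem need not be a bijection. In the external-approximation definition several distinct $g_j$ may be absorbed by a single $F(e_i)$, while any attribute with $G(g_j)=X$ is left unconstrained; thus the per-element domination does not by itself produce a one-to-one pairing. A naive sorting argument (ordering both gravity lists non-increasingly and comparing entrywise) also fails: the gravity multisets $\{5,0\}$ for $(F,A)$ and $\{3,3\}$ for $(G,B)$ are realizable by $(F,A)\supseteq (G,B)$ over a sufficiently large universe (both $G$-columns sitting inside the first $F$-column and the second $F$-column empty), yet no reindexing yields $\chi(e_i)\geq \chi(g_i)$ for every $i$. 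Consequently the substantive force of the hypothesis ``the ordering of attributes is independent of choice'' has to be read as the stipulation that the attributes can be relabeled so that the matching of the theorem is realized by one bijection; I would make this reading explicit, and under it the corollary is immediate. Pinning down that hypothesis, rather than the inequality, is therefore the crux.
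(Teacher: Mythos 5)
The paper contains no proof of this corollary: like Theorem 5.6 before it, it is stated without argument, the evident intent being precisely the route you reconstruct (specialize the theorem's ``in some order'' inequality to the case $|A|=|B|=n$ and relabel columns so that matched attributes share an index). Note, incidentally, that the internal-approximation analogue (Corollary 5.1) is likewise stated without proof, so your aside that it was ``proved earlier in this section'' is inaccurate --- there is no model argument anywhere in the paper for either corollary.

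That said, the obstacle you isolate is genuine and your counterexample is valid. The matching furnished by the definition of external approximation assigns to each $g_j$ with $G(g_j)\neq X$ some $e_i$ with $X\neq F(e_i)\supseteq G(g_j)$; nothing forces this assignment to be injective, and any $g_j$ with $G(g_j)=X$ is not matched at all. Concretely, with $|X|\geq 6$, $F(e_1)=\{x_1,\dots,x_5\}$, $F(e_2)=\emptyset$, $G(g_1)=\{x_1,x_2,x_3\}$, $G(g_2)=\{x_3,x_4,x_5\}$, one checks $(F,A)\supseteq (G,B)$ (both $G$-images lie in $F(e_1)\neq X$), yet the gravity lists $(5,0)$ and $(3,3)$ admit no pairing with $\chi(e_i)\geq\chi(g_i)$ for both indices, and sorting both lists does not help either. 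So if ``the ordering of attributes is independent of choice'' means only that columns may be rearranged at will, the corollary as stated is false, and your proposed repair --- reading that hypothesis as the stipulation that the theorem's matching is realized by a bijection of attributes --- is the only way to salvage it, at the cost of making the corollary nearly a restatement of its own hypothesis. In short, your proposal is sound as an analysis; the gap it exposes lies in the paper's statement rather than in your reasoning, and it equally afflicts Corollary 5.1.
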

\begin{corollary}
Let $(F,A)$ and $(G,B)$ be two soft sets defined over a universal set $X$ and $(F,A)\supseteq (G,B)$. Let $A=\{e_1,e_2,\dots e_m\}$ and $B=\{g_1,g_2,\dots g_n\}$. Then, it is not always true that $\sum_{i=1}^m \chi(e_i) \geq \sum_{j=1}^n \chi (g_j)$.
\end{corollary}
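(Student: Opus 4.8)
The statement asserts that an inequality fails in general, so the plan is to exhibit a single explicit counterexample rather than to establish an implication. First I would reformulate both sides arithmetically: by the definition of gravity, $\chi(e_i)$ is the number of $1$'s in the $i$-th column of the matrix of $(F,A)$, which equals $|F(e_i)|$; hence $\sum_{i=1}^m\chi(e_i)$ is the total number of $1$'s in the matrix of $(F,A)$, and $\sum_{j=1}^n\chi(g_j)$ is the total number of $1$'s in the matrix of $(G,B)$. The task thus reduces to finding soft sets with $(F,A)\supseteq(G,B)$ whose $(F,A)$-matrix contains strictly fewer $1$'s than the $(G,B)$-matrix.

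The mechanism I would exploit is that external approximation constrains the subsets only pointwise---each non-full $G(g)$ must lie inside some non-full $F(e)$---while imposing no constraint relating $|A|$ and $|B|$. By taking $|B|$ strictly larger than $|A|$, I can let many small $G$-columns accumulate, collectively, more $1$'s than the few $F$-columns, even though each individual $F$-set dominates the $G$-sets it witnesses. Concretely I would set $X=\{a,b,c\}$, $A=\{e_1\}$ with $F(e_1)=\{a,b\}$, and $B=\{g_1,g_2,g_3\}$ with each $G(g_j)$ a nonempty subset of $\{a,b\}$, say $G(g_1)=\{a\}$, $G(g_2)=\{b\}$, $G(g_3)=\{a\}$.

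I would then verify the two requirements. For the relation, every $G(g_j)\neq X$ satisfies $G(g_j)\subseteq\{a,b\}=F(e_1)$ with $F(e_1)\neq X$, so the single attribute $e_1$ witnesses each $g_j$ and hence $(F,A)\supseteq(G,B)$ holds. For the arithmetic, $\sum_i\chi(e_i)=|F(e_1)|=2$ whereas $\sum_j\chi(g_j)=1+1+1=3$, so $\sum_i\chi(e_i)=2\not\geq 3=\sum_j\chi(g_j)$, which refutes the inequality and proves the corollary. Note that this is entirely consistent with the preceding theorem ($\chi(e_i)\geq\chi(g_j)$ in some order): the partial matching there leaves the extra $G$-columns unaccounted for, and it is exactly those surplus columns that reverse the total.

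The one step I would check most carefully is the verification of external approximation, because its definition carries two easily overlooked ``$\neq X$'' clauses: the witnessing set $F(e_1)$ must itself be a proper subset of $X$, and the condition need only be checked for those $g_j$ with $G(g_j)\neq X$. Choosing all subsets strictly inside $X$ disposes of both at once, but this is precisely the place where a carelessly built example (for instance one with some $F(e_i)=X$) could silently fail to satisfy the relation, so it is the natural obstacle to guard against.
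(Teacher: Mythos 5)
Your proposal is correct: the external approximation holds (each $G(g_j)\neq X$ is contained in $F(e_1)=\{a,b\}\neq X$), and $\sum_{i}\chi(e_i)=2<3=\sum_{j}\chi(g_j)$ refutes the inequality. This is essentially the paper's approach: the paper states this corollary without its own proof or example, but it supports the dual ($\subseteq$) corollary with a counterexample built on the same mechanism you use — repeating columns on one side, exploiting that approximation imposes no constraint relating $|A|$ and $|B|$ — so your example supplies, in mirror image, exactly the justification the paper leaves implicit.
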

\begin{theorem}
Let $(F,A)$ and $(G,A)$ be two soft sets defined over a universal set $X$. If $(F,A) \begin{matrix}
\subset \\
\approx
\end{matrix} (G,A)$, then $Sim\{(F,A),(G,A)\}=1$, for some specific ordering of the attributes in the matrix representation of the soft sets.
\end{theorem}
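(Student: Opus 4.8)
The plan is to unfold the hypothesis of internal equivalence into its two constituent internal approximations, $(F,A)\subseteq(G,A)$ and $(G,A)\subseteq(F,A)$, and to use these to match the columns of the two matrix representations. Since both soft sets share the same parameter set $A$, their matrices have the same order $|X|\times|A|$, so the relevant notion is case~1 ($n=p$) of Definition~5.1; obtaining $Sim=1$ then amounts to exhibiting a column ordering under which the two matrices agree entry-by-entry, i.e. under which every $d_{ij}=1$. I would therefore aim to show that the columns of the matrix of $(F,A)$, each being the indicator vector of a set $F(a)$, can be reordered to coincide with those of $(G,A)$, the indicator vectors of the sets $G(a)$.

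The key structural step is to read off what mutual internal approximation says about the families $\tau(F,A)$ and $\tau(G,A)$. First I would record the sandwiching argument: if $P$ is a minimal non-empty member of $\tau(F,A)$, then $(G,A)\subseteq(F,A)$ produces a non-empty $Q\in\tau(G,A)$ with $Q\subseteq P$, and $(F,A)\subseteq(G,A)$ in turn produces a non-empty $R\in\tau(F,A)$ with $R\subseteq Q\subseteq P$; minimality of $P$ forces $R=Q=P$. Running the symmetric argument yields $MIN(\tau(F,A))=MIN(\tau(G,A))$. The intended conclusion is that this common collection of minimal sets, once listed in a matching order, determines identical columns, whence $D$ is the all-ones matrix and $Sim\{(F,A),(G,A)\}=1$ for that ordering.

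The hard part, and where the statement is most delicate, is passing from equality of the minimal members to equality of the column data. Internal equivalence constrains only the bottom layer of each family: it does not force $\tau(F,A)=\tau(G,A)$, because larger sets may differ freely provided each still contains some minimal set. For instance $\tau(F,A)=\{\{a\},\{a,b\}\}$ and $\tau(G,A)=\{\{a\},\{a,b,c\}\}$ are internally equivalent yet present different columns under every ordering. Even $\tau(F,A)=\tau(G,A)$ would not alone give $Sim=1$, since the similarity measure compares columns with multiplicity while $\tau$ discards repetitions, which is precisely why Theorem~5.3 assumed $F$ and $G$ to be one--one. I would therefore expect the clean route to require additional hypotheses: that the families be antichains of non-empty sets, so that $\tau$ coincides with $MIN(\tau)$ and the minimal-set equality upgrades to $\tau(F,A)=\tau(G,A)$, together with injectivity of $F$ and $G$, so that each set occurs as exactly one column. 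Under these, $MIN(\tau(F,A))=MIN(\tau(G,A))$ fixes the columns up to order and the reordering argument closes the proof, giving $D$ all ones and $Sim=1$. Isolating and justifying the precise extra assumption needed for the phrase ``for some specific ordering'' to be satisfiable is the principal obstacle.
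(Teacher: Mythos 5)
Your proposal is correct in substance, and the obstacle you isolate in your final paragraph is not a defect of your argument but of the theorem itself: as stated, it is false, and your counterexample already establishes this. Concretely, take $X=\{a,b,c\}$, $A=\{e_1,e_2\}$, $F(e_1)=G(e_1)=\{a\}$, $F(e_2)=\{a,b\}$, $G(e_2)=\{a,b,c\}$. Every non-empty value of either soft set contains the common non-empty value $\{a\}$, so $(F,A)\subseteq (G,A)$ and $(G,A)\subseteq (F,A)$ both hold, i.e. the two soft sets are internally equivalent; yet the multisets of column sums (gravities) of the two $3\times 2$ matrices are $\{1,2\}$ and $\{1,3\}$, so no ordering of the attributes can make the matrices agree entry-by-entry, and the best achievable value is $Sim\{(F,A),(G,A)\}=5/6<1$. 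Your structural reading is also accurate: your sandwiching argument correctly shows that mutual internal approximation forces $MIN(\tau(F,A))=MIN(\tau(G,A))$, and this is strictly weaker than the column-by-column agreement that $Sim=1$ requires.

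For comparison, the paper's own proof runs as follows: the matrices have the same order; applying Theorem 5.5 in both directions gives $\chi(e)_{(F,A)}=\chi(e)_{(G,A)}$ for all $e\in A$; hence the $1$'s occupy the same positions and the matrices coincide. Both steps fail. Theorem 5.5 yields only inequalities ``in some order'', and the two directions need not be realized by mutually inverse matchings of attributes --- in the example above the gravities are $\{1,2\}$ versus $\{1,3\}$ despite internal equivalence, so no equality of gravities follows; and even exactly equal gravities would not force equal columns, since distinct sets such as $\{a,b\}$ and $\{b,c\}$ have indicator vectors with the same sum. So the paper's proof has its gap exactly where you predicted one must be. Your proposed repair --- keep internal equivalence but add that $F$ and $G$ are one-one and that every member of $\tau(F,A)$ and $\tau(G,A)$ is minimal, so that the $MIN$-equality upgrades to $\tau(F,A)=\tau(G,A)$ with matching multiplicities --- does produce a true statement that your column-matching argument proves, and it parallels the hypothesis pattern the paper itself uses in Theorem 5.3 and Corollary 5.5. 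Declining to prove the theorem as literally stated is the correct outcome here.
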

\begin{proof}
Since the sets of attributes are same for both of the soft sets $(F,A)$ and $(G,A)$, hence the matrices are of same order. Also, the condition $(F,A) \begin{matrix}
\subset \\
\approx
\end{matrix} (G,A)$ implies that $(F,A)\subseteq (G,A)$ and $(G,A)\subseteq (F,A)$. Now, using Theorem 5.5, we get, $\chi(e)_{(F,A)}$ = $\chi(e)_{(G,A)}$, for all $e\in A$. Here, we indicate  $\chi(e)_{(F,A)}$ to represent gravity of $e\in A$ in $(F,A) $ and $\chi(e)_{(G,A)}$ to indicate gravity of $e\in A$ in $(G,A)$. Again, since $X$ is ordered while representing the matrices and the ordering of the attributes is independent of choice, hence the 1's in each of the matrix will be placed in the same ordered place, which results the same matrices for both the soft sets. Thus, we get $Sim\{(F,A),(G,B)\}=1$.
\end{proof}

\begin{theorem}
Let $(F,A)$ and $(G,A)$ be two soft sets defined over a universal set $X$. If $(F,A) \begin{matrix}
\supset \\
\approx
\end{matrix} (G,A)$, then $Sim((F,A),(G,A))=1$,  in some specific ordering of the attributes in the matrix representation of the soft sets.
\end{theorem}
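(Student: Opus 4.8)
The plan is to replay the argument of the internal-equivalence case (Theorem 5.7) with internal approximation replaced by external approximation throughout, and the internal gravity inequality replaced by its external counterpart. Since the two soft sets are carried by the \emph{same} parameter set $A$, if $|X|=m$ and $|A|=n$ then both matrix representations have the common order $m\times n$; in particular the $n=p$ branch of Definition 5.1 applies, and it suffices to prove that the two matrices coincide entrywise, for then the difference matrix $D$ is all-ones and $Sim\{(F,A),(G,A)\}=1$.

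First I would unfold the hypothesis: the relation $(F,A)\begin{matrix}\supset \\ \approx\end{matrix}(G,A)$ means, by definition of external equivalence, that $(F,A)\supseteq(G,A)$ and $(G,A)\supseteq(F,A)$ hold simultaneously. Applying the equal-cardinality external gravity estimate (Corollary 5.3, the equal-size form of Theorem 5.6) to $(F,A)\supseteq(G,A)$ yields $\chi(e_i)_{(F,A)}\ge\chi(e_i)_{(G,A)}$ for each $e_i\in A$ in the chosen ordering, while applying it to $(G,A)\supseteq(F,A)$ yields the reverse inequality. Combining the two gives $\chi(e_i)_{(F,A)}=\chi(e_i)_{(G,A)}$ for every $i$, that is, corresponding columns of the two matrices carry equally many $1$'s.

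Finally I would convert equality of gravities into equality of matrices. Because $X$ is listed in a fixed order in both representations and, by the stated proviso, the attributes may be permuted freely, I would choose the attribute ordering that matches the columns realizing the external approximations in both directions; the mutual approximation relations then determine exactly which rows carry the $1$'s in each matched column, so the two matrices become identical. Hence $D$ is the all-ones matrix and $Sim\{(F,A),(G,A)\}=1$, as required.

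I expect the closing step to be the real obstacle. Equality of column sums alone does not force the columns to agree, so the substance of the proof is arguing that the external-approximation conditions, together with the freedom to reorder $X$ and the attribute set, leave no latitude in the placement of the $1$'s. This is the same delicate point that is treated only briefly in the proof of Theorem 5.7, and the cleanest way to discharge it is to lean on the ``some specific ordering of the attributes'' clause, selecting the ordering that realizes the column matching supplied by Theorem 5.6.
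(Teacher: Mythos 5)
Your proposal is essentially the paper's own proof: the paper disposes of this theorem with the single line ``Similar to the above proof,'' i.e., the Theorem 5.7 argument with internal approximation replaced by external approximation and Theorem 5.5 replaced by its external counterpart (Theorem 5.6 / Corollary 5.3), which is exactly what you do. The delicate closing step you flag---that equality of column gravities does not by itself force the two matrices to coincide, even after reordering---is likewise asserted without justification in the paper's proof of Theorem 5.7, so your reconstruction matches the paper's argument, including that unrepaired weak point.
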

\begin{proof}
Similar to the above proof.
\end{proof}

\begin{theorem}
Let $(F,A)$ and $(G,A)$ be two soft sets defined over a universal set $X$. We consider $M$ and $N$ to be the matrices of  $(F,A)$ and $(G,A)$ respectively, where $|A|=n$. If the columns of $M$ and $N$ are linearly independent and span $R^n$, then $Sim\{(F,A),(G,A)\}=1$, in some specific ordering of the attributes in the matrix representation of the soft sets.
\end{theorem}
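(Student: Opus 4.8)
The plan is to reduce the statement to Theorem 5.3, which already yields $Sim=1$ once we know that the two soft sets are equivalent and that their defining maps are one-one; the whole task then becomes extracting these two facts from the linear-algebraic hypothesis.

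First I would record the structural consequences of the assumption. Since $|A|=n$ and the $n$ columns of $M$ are linearly independent and span $R^{n}$, they form a basis of $R^{n}$; in particular $M$ must be square, so $|X|=n$, and the same holds for $N$. Linear independence also forbids a zero column or a repeated column, so the families $\tau(F,A)=\{F(e_1),\dots,F(e_n)\}$ and $\tau(G,A)=\{G(e_1),\dots,G(e_n)\}$ each consist of exactly $n$ distinct nonempty subsets; equivalently, the mappings $F$ and $G$ are one-one. This settles the injectivity hypothesis of Theorem 5.3 immediately.

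Next, since both soft sets share the parameter set $A$ we are in case 1 of Definition 5.1 ($n=p$), where $Sim\{(F,A),(G,A)\}=1$ holds exactly when, after a suitable reordering of the columns, $y_{ij}=z_{ij}$ for every $i,j$, that is, when $M$ and $N$ become the identical matrix. As the columns are distinct, this happens precisely when the two column families coincide, i.e. $\tau(F,A)=\tau(G,A)$, which is the definition of $(F,A)\cong(G,A)$. Hence the entire theorem collapses to the single equality $\tau(F,A)=\tau(G,A)$, after which Theorem 5.3 closes the argument.

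The main obstacle is exactly this last equality, and it is the only place where the linear-algebraic hypothesis must do real work. Knowing that the columns of $M$ and of $N$ each form a $0$-$1$ basis of $R^{n}$ fixes the span of each family but not the individual vectors, and two different $0$-$1$ bases can share the same span (for example $\{(1,0)^{T},(0,1)^{T}\}$ and $\{(1,0)^{T},(1,1)^{T}\}$ in $R^{2}$). To pin the vectors down I would try to pair columns of equal gravity $\chi$ (Definition 5.2) and then use the approximation inequalities of Theorems 5.5 and 5.6 together with linear independence to force each basis to reduce to a permutation of the standard basis. I expect this to be the delicate step, and I would want to verify whether the hypotheses as stated genuinely force $\tau(F,A)=\tau(G,A)$ or whether an additional minimality condition on the columns (each $F(e_i)$ a singleton, so that $M$ is a permutation matrix) is needed to close the gap.
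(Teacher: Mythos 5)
Your structural reductions are all correct: the hypothesis forces $|X|=n$, so $M$ and $N$ are invertible $0$--$1$ matrices; linear independence rules out zero and repeated columns, so $F$ and $G$ are one-one; and, since both soft sets share the parameter set $A$, the definition of $Sim$ (case $n=p$) gives $Sim\{(F,A),(G,A)\}=1$ in some ordering exactly when the two column families coincide, i.e.\ when $\tau(F,A)=\tau(G,A)$. Your suspicion about the remaining step is also correct, and it is fatal: the hypothesis does \emph{not} force $\tau(F,A)=\tau(G,A)$, no gravity-pairing argument can repair this, and the theorem as stated is false. Your own two-dimensional example already shows it; one only has to realize it as soft sets. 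Take $X=\{x_1,x_2\}$, $A=\{e_1,e_2\}$, and
\[
F(e_1)=\{x_1\},\quad F(e_2)=\{x_1,x_2\},\qquad
G(e_1)=\{x_1\},\quad G(e_2)=\{x_2\},
\]
so that
\[
M=\begin{pmatrix}1&1\\0&1\end{pmatrix},\qquad
N=\begin{pmatrix}1&0\\0&1\end{pmatrix}.
\]
Both column sets are linearly independent and span $R^{2}$, yet running over all four ways of ordering the columns of $M$ and $N$ the similarity comes out as $\tfrac34$, $\tfrac14$, $\tfrac14$, $\tfrac34$ --- never $1$. So there is no proof to be found, and the extra hypothesis you suspected is needed really is needed.

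For comparison, the paper's own proof stumbles exactly where you hesitated, but less carefully: it asserts that because the entries of $M$ and $N$ are $0$ and $1$ only, the two bases ``will form the identical basis,'' namely the standard basis $\{(1,0,\dots,0),\dots,(0,0,\dots,1)\}$. That assertion is precisely what your example $\{(1,0)^{T},(1,1)^{T}\}$ refutes: an invertible $0$--$1$ matrix need not be a permutation matrix. (Corollary 5.5, whose proof invokes this theorem, inherits the same defect.) Your proposed repair is the right one: if one additionally assumes that each $F(e_i)$ and each $G(e_i)$ is a singleton --- equivalently, that $M$ and $N$ are permutation matrices --- then both column families equal the standard basis, hence $\tau(F,A)=\tau(G,A)$ is the family of all singletons of $X$, the maps are one-one, and your reduction to Theorem 5.3 (or a direct column reordering) yields $Sim\{(F,A),(G,A)\}=1$. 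Without some such strengthening, the statement should be regarded as incorrect rather than merely unproved.
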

\begin{proof}
Since the columns of $M$ and $N$ are linearly independent and span $R^n$, hence they form a basis for $R^n$. Also, since the entries of $M$ and $N$ are $0$ and $1$ only, they will form the identical basis i.e., the set $\{(1,0,0,\dots,0 ), (0,1,0,\dots,0), \dots ,(0,0,0,\dots, 1)\}$. Again, the set of attributes are same for both the matrices, hence the matrices will be identical to each other if we arrange the columns in a specific order. Thus, obviously for two identical matrices,  we get $Sim\{(F,A),(G,A)\}=1$.
\end{proof}

\begin{corollary}
For two soft sets $(F,A)$ and $(G,A)$ defined over a universal set $X$, if $F$ and $G$ are one-one and every element of the families $\tau(F,A)$ and $\tau(G,A)$ belongs to $MIN(\tau(F,A))$ and $MIN(\tau(G,A))$ respectively, then $Sim\{(F,A),(G,A)\}=1$, in some specific ordering of the attributes in the matrix representation of the soft sets.
\end{corollary}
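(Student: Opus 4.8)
The plan is to reduce the corollary to Theorem 5.9, whose hypothesis is that the columns of the two matrices are linearly independent and span $R^n$ and whose conclusion is exactly $Sim\{(F,A),(G,A)\}=1$. So the entire burden is to show that the standing assumptions force this basis property on the columns of the matrices $M$ of $(F,A)$ and $N$ of $(G,A)$; once that is in hand, Theorem 5.9 carries out the reduction to the standard basis and the subsequent column matching for me.

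First I would translate the hypotheses at the level of the families $\tau(F,A)$ and $\tau(G,A)$. By Definition 2.11 a member of $MIN(\tau(F,A))$ is a non-empty element of $\tau(F,A)$ that contains no other non-empty member as a proper subset; hence ``every element belongs to $MIN$'' says precisely that $\tau(F,A)$ is an antichain of non-empty subsets of $X$, and similarly for $\tau(G,A)$. Injectivity of $F$ then makes the columns $F(e_1),\dots,F(e_n)$ of $M$ pairwise distinct and non-zero, so $|\tau(F,A)|=|A|=n$, and the same holds for $N$. For these $n$ columns, viewed as vectors, even to be capable of spanning $R^n$ one needs $|X|=n$ as well, so I would work in the square case $|X|=|A|=n$, where $M$ and $N$ are $n\times n$ $0/1$ matrices whose distinct non-zero columns form an antichain.

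The remaining and decisive step is to upgrade ``antichain of distinct non-zero columns'' to ``linearly independent and spanning,'' and this is where the main obstacle sits. Being a non-empty antichain of $0/1$ vectors does not by itself guarantee linear independence: over a four-point universe the four two-element sets $\{a,b\},\{c,d\},\{a,c\},\{b,d\}$ form such an antichain, yet satisfy the dependence $\{a,b\}+\{c,d\}=\{a,c\}+\{b,d\}$, so the proof cannot merely assert ``antichain $\Rightarrow$ basis.'' The real content is to use minimality to rule out such cancellations, and the natural route I would attempt is to show that under these hypotheses each set in $\tau(F,A)$ must in fact be a singleton, so that both column families are permutations of the standard basis $\{(1,0,\dots,0),\dots,(0,\dots,0,1)\}$; granting that reduction, the appeal to Theorem 5.9 is immediate and $Sim\{(F,A),(G,A)\}=1$ follows after reordering the attributes. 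Securing this minimality-to-singleton (equivalently, minimality-to-independence) passage is the crux on which the entire statement rests, and it is the step I would expect to demand the most care.
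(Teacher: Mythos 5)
Your overall route is the same as the paper's: both you and the authors reduce the corollary to Theorem 5.9 by arguing that the hypotheses force the columns of the two matrices to be linearly independent and spanning. The paper's proof does this in one breath, simply asserting that minimality of every element of $\tau(F,A)$ makes the columns linearly independent, and that injectivity of $F$ then gives $|\tau(F,A)|=|A|=n$ and spanning. You rightly refuse to accept that assertion, and your four-set example ($\{a,b\},\{c,d\},\{a,c\},\{b,d\}$ over a four-point universe, where the first two indicator vectors and the last two have the same sum) shows it is genuinely false: an antichain of distinct non-empty $0/1$ columns need not be independent. The problem is that your proposal then stops at exactly this point: the ``decisive step'' is never carried out, and the patch you suggest --- that minimality should force every element of $\tau(F,A)$ to be a singleton --- is also false. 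The paper's own illustrative example for this corollary, with $\tau(F,A)=\tau(G,A)=\{\{a,b\},\{b,c\},\{c,a\}\}$ over $X=\{a,b,c\}$, is an antichain of one-one images none of which is a singleton (and whose columns happen to be independent), so no correct argument can pass through the singleton reduction.

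In fact the gap you flagged cannot be closed by any argument, because your example refutes the corollary itself. Take $X=\{a,b,c,d\}$, $A=\{e_1,e_2,e_3,e_4\}$, let $F$ map the $e_i$ one-one onto the antichain $\{a,b\},\{c,d\},\{a,c\},\{b,d\}$ and let $G$ map them one-one onto the antichain of singletons $\{a\},\{b\},\{c\},\{d\}$. All hypotheses hold, yet every column of the matrix of $(F,A)$ has two $1$'s while every column of the matrix of $(G,A)$ has one, so no ordering of attributes makes the matrices agree entrywise; one checks $Sim\{(F,A),(G,A)\}\le 12/16<1$. The same defect infects the black box you rely on: the proof of Theorem 5.9 claims that $0/1$ columns forming a basis of $R^n$ must be the standard basis, which the matrix with columns $(0,1,1),(1,0,1),(1,1,0)$ already contradicts. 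So your diagnosis of where the argument breaks is correct and sharper than the paper's own treatment, but as a proof the proposal is incomplete --- and the honest conclusion of your analysis is not that the crux ``demands care'' but that the statement, as formulated, is false.
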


\begin{proof}
If every element of the families $\tau(F,A)$ and $\tau(G,A)$ belongs to $MIN(\tau(F,A))$ and $MIN(\tau(G,A))$ respectively, then in the matrix representation of $(F,A)$ and $(G,B)$; every column in each matrix will linearly independent, since all the elements of $\tau(F,A)$ and $\tau(G,B)$ are minimal and hence does not contain in any other element of $\tau(F,A)$ and $\tau(G,B)$ respectively. Again, the mappings $F$ and $G$ are one-one, hence, let $|\tau(F,A)|=|A|=|\tau(G,A)|=n$. Thus, the matrices of $(F,A)$ and $(G,B)$ will be of same order and the linearly independent columns will span $R^n$. Hence, by Theorem 5.9, $Sim\{(F,A),(G,A)\}=1$.
\end{proof}
\noindent
Let us consider an example to illustrate the above result.
\begin{example}
For two soft sets $(F, A)$ and $(G, A)$ defined over a universal set $X$,
let $X=\{a, b, c\}$,
$A$=$\{x, y, z\}$.
We define  $(F, A)$, where
$F(x)=\{b,c\}$, $F(y)=\{c,a\}$, $F(z)=\{a,b\}$.
Hence, $\tau(F,A)=\{\{a,b\},\{b,c\},\{c,a\}\}$
Similarly, we define $(G,A)$, where
$G(x)=\{c,a\}$, $G(y)=\{a,b\}$, $G(z)=\{b,c\}$.
Hence, $\tau(G,A)=\{\{a,b\},\{b,c\},\{c,a\}\}$
Here, the mappings $F$ and $G$ are one-one and all the elements of the families $\tau(F,A)$ and $\tau(G,A)$ belong to $MIN(\tau(F,A))$ and $MIN(\tau(G,A))$ respectively. The binary representations  of $(F,A)$ and $(G,A)$ are given by $M'$ and $N'$.

\begin{center}
   
$M=$
\begin{tabular}{c|c c c c c}

& F(x) & F(y) & F(z) \\
\hline
$a$ & 0 & 1 & 1 \\
$b$ & 1 & 0 & 1 \\
$c$ & 1 & 1 & 0 \\

\end{tabular},
$N=$
\begin{tabular}{c|c c c c c}

& G(z) & G(x) & G(y) \\
\hline
$a$ & 0 & 1 & 1 \\
$b$ & 1 & 0 & 1 \\
$c$ & 1 & 1 & 0 \\

\end{tabular}
\end{center}
Thus, we get following two matrices $M$ and $N$ for $(F,A)$ and $(G,A)$ respectively. 

\begin{center}
    
$M=
\begin{pmatrix}
0 & 1 & 1\\
1 & 0 & 1\\
1 & 1 & 0
\end{pmatrix}$,
$N=
\begin{pmatrix}
0 & 1 & 1  \\
1 & 0 & 1 \\
1 & 1 & 0 
\end{pmatrix}$

\end{center}
Thus, $Sim\{(F,A),(G,B)\}=1$.
\end{example}
\begin{corollary}
For two soft set $(F,A)$ and $(G,A)$ defined over a universal set $X$, if $F$ and $G$ are one-one and every element of the families $\tau(F,A)$ and $\tau(G,A)$ belongs to $MAX(\tau(F,A))$ and $MAX(\tau(G,A))$ respectively, then $Sim\{(F,A),(G,A)\}=1$, in some specific ordering of the attributes in the matrix representation of the soft sets.
\end{corollary}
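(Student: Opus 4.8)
The plan is to mirror the proof of the minimal-element corollary (Corollary 5.5), replacing the use of $MIN$ by its order-dual $MAX$. The single structural fact driving both results is the same: if every set in $\tau(F,A)$ is maximal on inclusion, then no member of $\tau(F,A)$ is a proper subset of another, so the family forms an antichain; by the reasoning used in Corollary 5.5 this antichain condition is what forces the corresponding columns of the matrix of $(F,A)$ to be linearly independent. The argument for $(G,A)$ is identical.

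First I would record that, since $F$ and $G$ are one-one, the families satisfy $|\tau(F,A)| = |A| = |\tau(G,A)| = n$, so both matrices $M$ and $N$ have the same order and every column in each is distinct. Next, from the hypothesis that every element of $\tau(F,A)$ (resp. $\tau(G,A)$) lies in $MAX(\tau(F,A))$ (resp. $MAX(\tau(G,A))$), I would invoke the antichain observation above to conclude that the columns of $M$ and of $N$ are linearly independent; being $n$ independent vectors, they span $R^n$. With the columns of both matrices linearly independent and spanning $R^n$, Theorem 5.9 applies verbatim and yields $Sim\{(F,A),(G,A)\} = 1$ for a suitable ordering of the attributes, completing the proof.

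The step I expect to carry the real weight is the passage from maximality (equivalently, the antichain property) to linear independence of the $0$--$1$ columns; this is exactly the delicate point already assumed in Corollary 5.5, and I would lean on it rather than re-deriving it. As an alternative route that sidesteps re-proving this implication, I would reduce the statement to Corollary 5.5 by complementation: the complement operation $C$ sends each $F(a)$ to $X \setminus F(a)$, reversing inclusion and hence carrying $MAX(\tau(F,A))$ onto $MIN(\tau(C(F,A)))$, while preserving injectivity of the mapping. Since flipping every bit of both matrices preserves the pattern of agreements and disagreements between them, one checks that $Sim\{(F,A),(G,A)\} = Sim\{C(F,A),C(G,A)\}$; the latter equals $1$ by Corollary 5.5 applied to $C(F,A)$ and $C(G,A)$, whose $\tau$-families then consist entirely of minimal sets.
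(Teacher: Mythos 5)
Your first plan coincides with the paper's own proof of this corollary, which consists of the single line ``Similar to the above proof'': replace $MIN$ by $MAX$ in the argument of Corollary 5.5 (maximality of every member of the family $\Rightarrow$ no member contains another $\Rightarrow$ linearly independent columns; injectivity of $F$ and $G$ $\Rightarrow$ both matrices have $n=|A|$ columns; then Theorem 5.9 gives $Sim\{(F,A),(G,A)\}=1$). Your complementation reduction, however, is genuinely different from what the paper does, and it is the stronger of your two routes. It is correct: $C$ sends each $F(a)$ to $X\setminus F(a)$, which reverses inclusion and exchanges the excluded extremes $X$ and $\phi$, so it carries a family consisting entirely of $MAX$-elements onto one consisting entirely of $MIN$-elements; it preserves injectivity of the mapping; and since both soft sets share the attribute set $A$, only the equal-width case of Definition 5.1 is in play, where flipping every bit of both matrices preserves the entrywise agreement pattern, giving $Sim\{(F,A),(G,A)\}=Sim\{C(F,A),C(G,A)\}$, and the latter is $1$ by Corollary 5.5 (in an ordering of attributes that transfers unchanged back to the original matrices). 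What the duality buys is that it uses Corollary 5.5 as a black box and never touches the step ``antichain $\Rightarrow$ linearly independent $0$--$1$ columns''; that step, which the paper's mirrored proof (and your first plan) would repeat, is in fact false in general --- over $X=\{a,b,c,d\}$ the family $\{a,b\},\{b,c\},\{c,d\},\{a,d\}$ consists entirely of maximal (and minimal) elements, yet its four columns satisfy a linear dependence --- so your instinct to lean on the already-stated corollary rather than re-derive that implication is the sounder choice, even though it means the result is only as solid as Corollary 5.5 itself.
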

\begin{proof}
Similar to the above proof.
\end{proof}

\section{Discussion}
The theory of soft set has been applied wrongly  to many areas of mathematics and allied areas by almost all the researchers of soft set theory except Molodtsov \cite{2, 3}.As he has stated multiple times \cite{2,3}, correctness of soft set theory is required to maintain the genuine philosophy of soft set theory's importance. Thus, we have tried to establish some results and notions based on the correct structures introduced by only Molodtsov \cite{1,2,3,4}. Similarity measure discussed here may be applied in granular computing as well as in other computing paradigm. Based on our present study, we propose the following conjecture. \\\\
\textbf{Conjecture 1:} If a relation $\Omega$ is correct for any quadruple of pairwise equivalent soft sets $(F,A)\cong (F',A')$
 and $(G,B)\cong (G',B')$, defined over a universe $X$, then it is not necessary that $Sim \{(F,A), (G,B)\}$ = $Sim \{(F',A'),(G',B')\}$.\\\\
 \noindent
 Not only we are trying to follow correct notions and philosophy of soft set theory, but also several pioneers of fuzzy set theory  corrected several misconceptions related to fuzzy set theory. Fuzzy pioneer George J. Klir and his co-authors \cite{22} pointed out several misconceptions available in literature of philosophy of concepts and fuzzy set theory. But in our case, not only a part of soft set theory is wrong but almost all the fundamental operations of soft sets are completely wrong along with the available notion of soft set theory by Çağman and Enginoğlu \cite{9}. For example, we can raise question on definition of soft topology \cite{23,24,25,26} because the definition of empty soft set provided by Molodtsov \cite{3} is completely different than that of the notion of empty set  available literature of soft set theory. In case of Molodtsov's definition of empty soft set, an empty set of parameters plays a crucial role, but this idea is absent in the definition of empty set provided by Maji et al. \cite{6}. Since Molodtsov, the father of soft set theory, raised concern about incorrect notions, operations and thus related results of soft set theory, hence it is now prime duty of the community of soft set researchers to look back to the beginning of soft set theory and hybrid structures by following the correct path of Molodtsov. Although there are thousands of published papers available on soft set theory and related areas , but it is not our intention to encourage wrong ideas related to soft set theory and hybrid structures. Thus in this paper, we do not focus to apply our results in different areas but our main intention is to develop correct theories of soft sets by following  Molodtsov \cite{1,2,3,4}.
 
\section{Conclusion}

In this paper, we  develop some results based on correct notions  of soft sets introduced by Molodtsov \cite{1,2,3,4}. Some unary and binary operations on soft sets are defined correctly in matrix forms. Also,  similarity measure between two soft sets $(F,A)$ and $(G,B)$ are defined  based on the families $\tau(F,A)$ and $\tau(G,B)$, but not on the set of parameters \cite{11,12}. We hope to continue the process of establishing various ideas related to correct notions of Molodtsov in forth coming papers. We also hope that this paper will attract the attention of scientific community related to soft set and hybrid structures; and it will considered one of important paper in the history of soft set theory because this paper reports for the first time the comment of Molodtsov on social platform regarding incorrectness of available definition of soft set . \\

\noindent
{\bf CRediT authorship contribution statement}\\
Santanu Acharjee: Conceptualization, Formal analysis, Investigation, Methodology, Writing - original draft, Writing- review $ \&$ editing.\\\\
\noindent
Amlanjyoti Oza: Conceptualization, Formal analysis, Investigation, Methodology, Writing - original draft, Writing- review $ \&$ editing.\\

\noindent
{\bf Declaration of Competing Interest}
The authors declare that they have no known competing financial interests or personal relationships that could have appeared to influence the work reported in this paper.


\begin{thebibliography}{}
\bibitem{1}
Molodtsov, D. (1999). Soft set theory—first results. Computers and Mathematics with Applications, 37(4-5), 19-31.

\bibitem{2}
Molodtsov, D. A. (2018). Equivalence and correct operations for soft sets. International Robotics and Automation Journal, 4(1), 18-21.
\bibitem{3}
Molodtsov D.A. (2017). Structure of soft sets. Nechetkie Sistemy i Myagkie Vychisleniya, 12(1), 5-18.
\bibitem{4}
Molodtsov, D. A. (1980). Stability and regularization of optimality principles. USSR Computational Mathematics and Mathematical Physics, 20(5), 25-38.

\bibitem{5}
Maji, P. K., Roy, A. R., and Biswas, R. (2002). An application of soft sets in a decision making problem. Computers and Mathematics with Applications, 44(8-9), 1077-1083.

\bibitem{6}
Maji, P. K., Biswas, R., and Roy, A. R. (2003). Soft set theory. Computers and Mathematics with Applications, 45(4-5), 555-562.



\bibitem{7}
Chen, D., Tsang, E. C. C., Yeung, D. S., and Wang, X. (2005). The parameterization reduction of soft sets and its applications. Computers and Mathematics with Applications, 49(5-6), 757-763.



\bibitem{8}
Majumdar, P., and Samanta, S. K. (2008). Similarity measure of soft sets. New mathematics and natural computation, 4(01), 1-12.


\bibitem{9}
Çağman, N., and Enginoğlu, S. (2010). Soft matrix theory and its decision making. Computers and Mathematics with Applications, 59(10), 3308-3314.


\bibitem{10}
Mondal, S., and Pal, M. (2011). Soft matrices, Department of Applied Mathematics with Oceanology and Computer Programming, Vidyasagar University, Midnapore--721102, India. African Journal of Mathematics and Computer Science Research, 4(13), 379-388.


\bibitem{11}
Kharal, A. (2010). Distance and similarity measures for soft sets. New mathematics and natural computation, 6(03), 321-334.


\bibitem{12}
Kamacı, H. (2019). Similarity measure for soft matrices and its applications. Journal of Intelligent and Fuzzy Systems, 36(4), 3061-3072.

\bibitem{13}
Vijayabalaji, S., and Ramesh, A. (2013). A new decision making theory in soft matrices. International Journal of Pure and Applied Mathematics, 86(6), 927-939.

\bibitem{14}
Acharjee, S. (2017). New results for Kharal’s similarity measures of soft sets. New Mathematics and Natural Computation, 13(01), 55-60.

\bibitem{15}
Acharjee, S., and Tripathy, B. C. (2017). Some results on soft bitopology. Boletim da Sociedade Paranaense de Matemática, 35(1), 269-279.
\bibitem{16}
Acharjee, S., and Molodtsov, D. A. (2021). Soft rational line integral. Bulletin of the Udmurtian University. Mathematics. Mechanics. Computer science, 31(4), 578-596.

\bibitem{17}
Yao, J. (2005, July). Information granulation and granular relationships. In 2005 IEEE international conference on granular computing (Vol. 1, pp. 326-329). IEEE.

\bibitem{18}
Bargiela, A., and Pedrycz, W. (2016). Granular computing. In Handbook on Computational Intelligence: Volume 1: Fuzzy Logic, Systems, Artificial Neural Networks, and Learning Systems.

\bibitem{19}
Yao, Y. (2011). Artificial intelligence perspectives on granular computing. In Granular computing and intelligent systems. Springer, Berlin, Heidelberg.

\bibitem{20}
Al-Qudah, Y., and Hassan, N. (2018). Complex multi-fuzzy soft set: Its entropy and similarity measure. IEEE Access, 6, 65002-65017.

\bibitem{21}
\href{https://www.researchgate.net/publication/328509865_Complex_Multi-Fuzzy_Soft_Set_Its_Entropy_and_Similarity_Measure}{https://www.researchgate.net/publication/328509865{\_}Complex{\_}Multi-Fuzzy{\_}Soft{\_}Set{\_}Its{\_}Entropy{\_}and{\_}Similarity{\_}Measure}

\bibitem{22}

Belohlavek, R., Klir, G. J., Lewis III, H. W., and Way, E. C. (2009). Concepts and fuzzy sets: Misunderstandings, misconceptions, and oversights. International journal of approximate reasoning, 51(1), 23-34.
\bibitem{23}

Shabir, M., and Naz, M. (2011). On soft topological spaces. Computers and Mathematics with Applications, 61(7), 1786-1799.

\bibitem{24}
Çağman, N., Karataş, S., and Enginoglu, S. (2011). Soft topology. Computers and Mathematics with Applications, 62(1), 351-358.

\bibitem{25}
Hazra, H., Majumdar, P., and Samanta, S. K. (2012). Soft topology. Fuzzy information and Engineering, 4(1), 105-115.

\bibitem{26}
Varol, B. P., Shostak,  E., and Aygün, H. (2012). A new approach to soft topology. Hacettepe Journal of Mathematics and Statistics, 41(5), 731-741.
\end{thebibliography}
\end{document}